\documentclass[a4paper, 11pt, reqno]{amsart}
\usepackage{amsmath,amsfonts,amssymb,amsthm,enumerate}
\usepackage{graphicx}
\usepackage{xy} \xyoption{all}

\newtheorem{thm}{Theorem}[section]
\newtheorem{cor}[thm]{Corollary}
\newtheorem{prop}[thm]{Proposition}
\newtheorem{lem}[thm]{Lemma}

\theoremstyle{definition}
\newtheorem{defn}[thm]{Definition}
\newtheorem{exas}[thm]{Example}

\let\phi\varphi

\begin{document}
	\title{Leavitt path algebras having graded  Invariant Basis Number}
	\maketitle
	
	\begin{center}
		Ngo Tan Phuc\footnote{Faculty of Mathematics - Informatics Teacher Education, Dong Thap University, Vietnam. E-mail address: \texttt{ntphuc@dthu.edu.vn}  
			
			\ \ {\bf Acknowledgements}:   The author expresses his deep gratitude to Professor Tran Giang Nam (Institute of Mathematics, VAST,  Vietnam)  and Professor Roozbeh Hazrat (Centre for Research in Mathematics and Data Science, Western Sydney University, Australia) for their valuable suggestions
			which led to the final shape of the paper. The author is partially supported by the Vietnam Ministry of Education and Training under the grant
			number B2026-CTT-02. 
		}
		
	\end{center}
	
	\begin{abstract} 
	 In this paper, we study the Graded Invariant Basis Number (gr-IBN) property for Leavitt path algebras of finite graphs. Using the talented monoid as our main tool, we establish a complete matrix-theoretic characterization of when a Leavitt path algebra of a finite graph fails to have gr-IBN. Consequently, we identify several classes of graphs whose Leavitt path algebras have gr-IBN, including graphs with sinks, Cayley graphs, and Hopf graphs associated with finite groups. We also investigate the preservation of gr-IBN under quotients by hereditary saturated subsets and under Cartesian products of graphs.
			
		\medskip
		
		\textbf{Mathematics Subject Classifications 2020}: 16S88, 16S99, 05C25
		
		\textbf{Key words}: Leavitt path algebra, invariant basis number, graded invariant basis number, talented monoid, graph monoid.
	\end{abstract}
	
	\section{Introduction}
	Given a (row-finite) directed graph $E$ and a field $K$, Abrams and Aranda Pino
	\cite{ap:tlpaoag05}, and, independently Ara, Moreno, and Pardo \cite{amp:nktfga},
	introduced the \emph{Leavitt path algebra} $L_K(E)$. Abrams and Aranda Pino \cite{ap:tlpaoag08} later
	extended this definition  to all countable directed graphs. Goodearl \cite{g:lpaadl09} extended the notion of Leavitt path algebras to all (possibly uncountable) directed graphs. In \cite{tomf:lpawciacr}, Tomforde generalized the construction of Leavitt path algebras by replacing the field with a commutative ring. Katsov, Nam and Zumbr\"{a}gel \cite{knz:solpawcicr} considered the concept of Leavitt path algebras with coefficients in a commutative semiring. Leavitt path algebras generalize the Leavitt algebras $L_K(1, n)$ of \cite{leav:tmtoar}, and also contain many other interesting classes of algebras. In addition, Leavitt path algebras are intimately related to graph $C^*$-algebras (see~\cite{r:ga}). During the past twenty years, Leavitt path algebras have become a topic of intense investigation across multiple areas of mathematics. For a detailed history and overview of Leavitt path algebras, we refer the reader to the survey article~\cite{a:firstde}.
	
	{\it Invariant Basis Number} (IBN) is a fundamental ring-theoretic property: a unital ring $R$ has IBN if the isomorphism $R^m \cong R^n$ (as right $R$-modules) implies $m = n$. The IBN property of Leavitt path algebra has been studied in several papers \cite{ak:cpahibn, kbgm:colpawtv, ko:clpaatibnp, np:tsolpaatibnp}. In particular, Kanuni and \"{O}zaydin \cite{ko:clpaatibnp}  and Nam and the author \cite{np:tsolpaatibnp} gave criteria for Leavitt path algebras of finite graphs to have~IBN.
	
	{\it Graded Invariant Basis Number} (gr-IBN) is a refinement of IBN for graded rings: a $\mathbb{Z}$-graded ring $R$ has gr-IBN if the isomorphism $R(\alpha_1)\oplus  \cdots \oplus R(\alpha_m) \cong_{gr} R(\beta_1)\oplus  \cdots \oplus R(\beta_n)$ (as graded modules) implies $m = n$. Since graded isomorphisms are more restrictive than ungraded ones, IBN implies gr-IBN, but the converse is not obvious.

 	Hazrat and Li \cite{hl:tmLpa} established a deep connection between the geometry of a graph $E$ and the algebraic structure of its \emph{talented monoid} $T_E$, the graded analogue of the well-known graph monoid. This parallels the correspondence between the geometry of $E$ and the algebraic structure of the Leavitt path algebra $L_K(E)$. Crucially, $T_E$ can be realized as the graph monoid of the covering graph $\overline{E}$, which allows us to apply the Confluence Lemma and translate questions about graded module isomorphisms into equalities in $T_E$.
 	
 	Motivated by this machinery, we extend the IBN problem to the graded setting. Since Leavitt path algebras are $\mathbb{Z}$-graded algebras, it is natural to investigate their gr-IBN property. The main goal of this paper is to characterize when a Leavitt path algebra has gr-IBN, and to identify several classes of graphs for which this property holds.

	The paper is organized as follows. Section~2 reviews the necessary background on Leavitt path algebras, graded modules, IBN and gr-IBN, the talented monoid and its connection to the covering graph. We identify several classes of graphs whose Leavitt path algebras have gr-IBN (Corollaries \ref{maximal} and \ref{atmost}).
	In Section~3, we prove that the Leavitt path algebras of graphs with sinks have gr-IBN (Proposition \ref{hassink}) and   provide a complete criterion for $L_K(E)$ to fail gr-IBN in terms of the adjacency matrix $A_E$ (Theorem \ref{thm-nogribn}). In Section 4, based on the results of Section 3, we characterize the gr-IBN property for Leavitt path algebras of Cayley graphs and Hopf graphs associated with finite groups (Propositions \ref{Caley} and  \ref{HopfThm}). 
	We also show that the gr-IBN property of $L_K(E)$ can be reduced to that of $L_K(E/H)$, where $H$ is a hereditary saturated subset of $E^0$ (Proposition \ref{cor4.7}). Finally, we apply this result to establish the gr-IBN property for Leavitt path algebras of Cartesian products of graphs (Proposition \ref{source}), which is an extension of \cite[Proposition 3.15]{Li2024}.

	\section{Preliminaries}
	In this section, we fix notation and recall standard definitions regarding Leavitt path algebras, graded modules, graph monoids, and the talented monoid—the main technical tool of this paper. 
	
	\subsection{Graphs and Leavitt path algebras}
	
	We begin by recalling some general notions of graph theory: A (directed) graph
	$E = (E^0, E^1, s, r)$ (or shortly $E = (E^0, E^1)$)
	consists of two disjoint sets $E^0$ and $E^1$, called \emph{vertices} and \emph{edges}
	respectively, together with two maps $s, r: E^1 \longrightarrow E^0$.  The
	vertices $s(e)$ and $r(e)$ are referred to as the \emph{source} and the \emph{range}
	of the edge~$e$, respectively. The graph is called \emph{row-finite} if
	$|s^{-1}(v)|< \infty$ for all $v\in E^0$. All graphs in this paper will be assumed
	to be row-finite. A graph $E$ is \emph{finite} if both sets $E^0$ and $E^1$ are finite
	(or equivalently, when $E^0$ is finite, by the row-finite hypothesis).
	A vertex~$v$ for which $s^{-1}(v)$ is empty is called a \emph{sink}; a vertex~$v$ for which
	$r^{-1}(v)$ is empty is called a \emph{source}; a vertex~$v$ is called an \emph{isolated vertex}
	if it is both a source and a sink; and a vertex~$v$ is \emph{regular}
	iff $0 < |s^{-1}(v)| < \infty$. 
	%A graph $E$ is said to be \emph{source-free} if it has
	%no sources.
	
	A \emph{path} $p = e_{1} \dots e_{n}$ in a graph $E$ is a sequence of
	edges $e_{1}, \dots, e_{n}$ such that $r(e_{i}) = s(e_{i+1})$ for all $i
	= 1, \dots, n-1$.  In this case, we say that the path~$p$ starts at
	the vertex $s(p) := s(e_{1})$ and ends at the vertex $r(p) :=
	r(e_{n})$, and has \emph{length} $|p| := n$. If there exists a path $p$ with $s(p) =v, r(p)=w$, then we write $v\geq w$. We denote by $p^0$
	the set of its vertices, that is, $p^0 = \{s(e_i), r(e_i)\ |\ i = 1,...,n\}$.
	%We consider the vertices in~$E^0$ to be paths of length~$0$.
	If $p$ is a path in $E$, and if $v= s(p) = r(p)$, then~$p$ is a \emph{closed path
		based at} $v$. A closed path based at~$v$, $p = e_{1} \dots e_{n}$, is
	a \emph{closed simple path based at}~$v$ if $s(e_i) \neq v$ for
	every $i > 1$. If $p = e_{1} \dots e_{n}$ is a closed path and all vertices
	$s(e_{1}), \dots, s(e_{n})$ are distinct, then the subgraph
	$(s(e_{1}), \dots, s(e_{n}); e_{1}, \dots, e_{n})$ of the graph $E$
	is called a \emph{cycle}.  An edge~$f$ is an \emph{exit} for a path
	$p = e_{1} \dots e_{n}$ if $s(f) = s(e_{i})$ but $f \ne e_{i}$ for
	some $1 \le i \le n$. 
%	A graph $E$ is \emph{acyclic} if it has no cycles;
%	and the graph $E$ is said to be a \emph{no-exit graph} if no cycle in $E$ has an exit.
	
%	For any graph $E= (E^0, E^1)$ and any $v\in E^0$, the set $T(v):= \{w\in E^0\ |\
%	v\geq w\}$ is the \emph{tree} of $v$, that is, the set of all the vertices in the graph
%	$E$ which follow $v$ ($v\geq w$ means that there exists a path $p$ with $s(p) =v$
%	and $r(p) = w$). 
	%We will denote it by $T_E(v)$ when it is necessary to emphasize the dependence on the graph $E$.

	A subset $H \subseteq E^0$ is said to be {\it hereditary} if $r(e) \in H$ for any $v \in H$ and $e \in s^{-1}(v)$.
	We say that $H$ is {\it saturated} if, for every regular vertex $v \in E^0$, the inclusion
	$
	r\big(s^{-1}(v)\big) \subseteq H
	$
	implies that $v \in H$.
	
	%Denote by $\mathcal{H}_E$ the set of all hereditary saturated subsets of $E^0$.
	%The sets $\varnothing$ and $E^0$ are called the trivial hereditary saturated subsets of $E^0$.

	For any graph $E= (E^0, E^1)$, we denote by $A_E$ the \emph{adjacency matrix} of
	$E$. Formally, if $E^0 = \{v_1,..., v_n\}$, then $A_E = (a_{ij})$ the $n\times n$
	matrix for which $a_{ij}$ is the number of edges having $s(e)=v_i$ and
	$r(e)=v_j$. %Specially, if $v_i\in E^0$ is a sink, then $a_{ij} = 0$ for all
	%$j=1,...,n$.

%	To a directed graph, one can associate an algebra generated by vertices and edges, subject to relations that "locally" on each vertex resemble those that were considered by William Leavitt in his seminal papers in the 1960's (see [2] for a comprehensive history). Such algebras, when associated to non-cyclic strongly connected graphs, are purely infinite simple, that is, each one-sided ideal contains an infinite idempotent.
	
	\begin{defn} [{\cite[Definition 1.2.3]{aam:lpa}}]
	Let $E$ be an arbitrary graph and $K$ any field, the \emph{Leavitt path algebra}
	$L_K(E)$ of the graph $E$ with coefficients in $K$ is the $K$-algebra generated by the sets
	$E^0$ and $E^1$ together with a set of variables $\{e^{*} \mid e \in E^1\}$, satisfying the following relations
	for all $v, w \in E^0$ and $e, f \in E^1$:
	\begin{enumerate}
		\item[(1)] $vw = \delta_{v,w}\, w$;
		
		\item[(2)] $s(e)e = e = er(e)$ and $r(e)e^{*} = e^{*} = e^{*}s(e)$;
		
		\item[(3)] $e^{*}f = \delta_{e,f}\, r(e)$;
		
		\item[(4)] $v = \sum_{e \in s^{-1}(v)} ee^{*}$ for any regular vertex $v$.
	\end{enumerate}
	\end{defn}
	
	{\it We mention that throughout the note, $K$ is an arbitrary field.} 
	
	If the graph $E$ is finite, then $L_K(E)$ is a unital ring having identity
	$
	1 = \sum_{v \in E^0} v
	$
	(see, e.g., \cite[Lemma 1.6]{ap:tlpaoag05}). For any path $p = e_1 e_2 \cdots e_n$, the element
	$e_n^{*} \cdots e_2^{*} e_1^{*}$ of $L_K(E)$ is denoted by $p^{*}$.
	It can be shown (\cite[Lemma 1.7]{ap:tlpaoag05}) that $L_K(E)$ is spanned as a $K$-vector space by
	$$
	\{\, p q^{*} \mid p,q \in E^{*},\ r(p)=r(q) \,\}.
	$$
	Indeed, $L_K(E)$ is a $\mathbb{Z}$-graded $K$-algebra
	$$
	L_K(E) = \bigoplus_{n \in \mathbb{Z}} L_K(E)_n,
	$$
	where for each $n \in \mathbb{Z}$, the degree-$n$ component $L_K(E)_n$ is the set
	$$
	\operatorname{span}\{\, p q^{*} \mid p,q \in \mathrm{Path}(E),\ r(p)=r(q),\ |p|-|q|=n \,\}.
	$$
	
	\subsection{Ring-theoretic concepts}
	
	Let $\Gamma$ be an abelian group. A ring $R$ (possibly without unit) is called a $\Gamma$-\emph{graded ring} if $R=\oplus_{\gamma\in \Gamma}R_{\gamma}$ such that each $R_{\gamma}$ is an additive subgroup of $R$ and $R_{\lambda}R_{\gamma}\subseteq R_{\lambda+\gamma}$ for all $\lambda, \gamma\in \Gamma$. Let $R$ be a $\Gamma$-graded ring. A \emph{graded right $R$-module} $M$ is defined to be a right $R$-module $M$ with a direct sum decomposition $M=\oplus_{\gamma\in \Gamma}M_{\gamma}$, where each $M_{\gamma}$ is an additive subgroup of $M$ such that $M_{\lambda}R_{\gamma}\subseteq M_{\lambda+\gamma}$ for all $\lambda, \gamma \in \Gamma$. For $\Gamma$-graded right $R$-modules $M$ and $N$, a  $\Gamma$-\emph{graded module homomorphism} $f: M\longrightarrow N$ is a module homomorphism such that $f(M_{\gamma})\subseteq N_{\gamma}$ for all $\gamma \in \Gamma$. A graded homomorphism $f$ is called \emph{graded module isomorphism} if it is bijective and we write $M\cong_{gr} N.$ 
	
	Let $M$ be a $\Gamma$-graded right $R$-modules. For $\delta\in \Gamma$, we define the $\delta$-\emph{shifted} graded right $R$-module $M(\delta)$ as $M(\delta)=\oplus_{\gamma\in \Gamma}M(\delta)_{\gamma}$, where $M(\delta)_{\gamma}=M_{\delta + \gamma}.$
	
	For a ring $R$ with unit, the isomorphism classes of finitely generated projective
	(left/right) $R$-modules with direct sum as the operation form a monoid denoted by $\mathcal{V}(R)$.
	This construction can be extended to non-unital rings via idempotents. 
	For a $\Gamma$-graded ring $R$, considering the graded finitely generated projective modules, it provides us with the monoid $\mathcal{V}^{\text{gr}}(R)$  which has an action of $\Gamma$ on it via the shift operation on modules.
	In this article we consider these monoids when the algebra is a Leavitt path algebra associated to a row-finite graph $E$.
	
	\begin{defn} (1) (\cite[page 3]{c:firaligr})
		A ring $R$ is said to have \emph{Invariant Basis Number} (for short, IBN) if,
		for any pair of positive integers $m \text{ and } n$, $$R^m\cong R^n \text{ (as right modules)}$$ implies that $m=n$.
		
		(2) (\cite[page 282]{hazrat:ggklpa} and \cite[Definition 4.3]{IBNgraded})
		A $\Gamma$-graded ring $R$ is said to have \emph{Graded Invariant Basis Number} (for short, gr-IBN) if,
		for any two systems of elements $\{\alpha_1, \dots, \alpha_m\}$ and $\{\beta_1, \dots, \beta_n\}$ of $\Gamma$, $$R(\alpha_1)\oplus  \cdots \oplus R(\alpha_m) \cong_{gr} R(\beta_1)\oplus  \cdots \oplus R(\beta_n) \text{ (as right modules)}$$ implies that $m=n$.
	\end{defn}
		Clearly, if $f$ is a graded module isomorphism then $f$ is also a module isomorphism (by forgetting the grading). It follows that:
	\begin{cor}\label{UGNIBNgrIBN} 
		Let $R$ be a $\Gamma$-graded ring. If $R$ has IBN, then $R$ has gr-IBN.
	\end{cor}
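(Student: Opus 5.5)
The plan is to prove the contrapositive directly from the definitions: assuming gr-IBN fails, we produce a failure of IBN. So suppose there are elements $\alpha_1,\dots,\alpha_m$ and $\beta_1,\dots,\beta_n$ of $\Gamma$ and a graded isomorphism
$$
f\colon R(\alpha_1)\oplus\cdots\oplus R(\alpha_m)\longrightarrow R(\beta_1)\oplus\cdots\oplus R(\beta_n).
$$
We want to conclude that $R^m\cong R^n$ as ungraded right $R$-modules. Since $R$ has IBN, this forces $m=n$, which is exactly the gr-IBN condition.

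The first step is to note that forgetting the grading sends $R(\delta)$ to the right module $R_R$ itself, for every $\delta\in\Gamma$. By definition, $R(\delta)=\bigoplus_{\gamma}R(\delta)_\gamma$ with $R(\delta)_\gamma=R_{\delta+\gamma}$. As a set with its module structure this is just $\bigoplus_\gamma R_{\delta+\gamma}=R$, and the action is still right multiplication; only the labelling of the homogeneous components has changed. Because finite direct sums are also preserved, the underlying ungraded modules of the two sides are $R^m$ and $R^n$.

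The second step is to observe that $f$, being a graded module homomorphism, is in particular an $R$-module homomorphism. It is also bijective, so as a map of underlying modules it is an isomorphism $R^m\cong R^n$. The IBN hypothesis then gives $m=n$, and this completes the argument.

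No step here is a real obstacle. The only point needing a line of justification is the identification of the underlying module of a shift $R(\delta)$ with $R_R$, and that follows immediately from the definition of the shifted module recalled above.
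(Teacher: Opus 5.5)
Your proof is correct and is essentially the paper's argument: forgetting the grading turns a graded isomorphism $R(\alpha_1)\oplus\cdots\oplus R(\alpha_m)\cong_{gr}R(\beta_1)\oplus\cdots\oplus R(\beta_n)$ into an ordinary isomorphism $R^m\cong R^n$, and IBN then gives $m=n$. You also spell out that the underlying module of each shift $R(\delta)$ is $R_R$, which the paper leaves implicit; and although you announce a contrapositive, what you actually give is a direct proof, which is harmless.
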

	
	In practice, it is not easy to determine the gr-IBN property of a given ring. However,the following lemma provides a criterion based on rings whose gr-IBN property is already known. This is the graded analogue of \cite[Remark 1.5]{l:lomar}.
	\begin{lem}\label{r--s}
			Suppose there is a graded ring homomorphism $R \to S$ and $S$ has gr-IBN. Then $R$ has gr-IBN. 
	\end{lem}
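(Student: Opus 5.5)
The plan is to use graded base change along $\phi\colon R\to S$, namely the functor $-\otimes_R S$ from graded right $R$-modules to graded right $S$-modules. I assume $\phi$ is a unital graded homomorphism, so $\phi(1_R)=1_S$ and $\phi(R_\gamma)\subseteq S_\gamma$; this is the implicit convention of \cite[Remark 1.5]{l:lomar}. We argue by contrapositive: if $R$ fails gr-IBN, we show $S$ fails it as well.

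First, I will check that for a graded right $R$-module $M$, the tensor product $M\otimes_R S$ is a graded right $S$-module. Its homogeneous component of degree $\gamma$ is spanned by the elements $m\otimes s$ with $m\in M_\lambda$, $s\in S_\mu$ and $\lambda+\mu=\gamma$. This grading is well defined because $S$ becomes a graded left $R$-module via $\phi$, which is the standard graded tensor product construction. Next, if $f\colon M\to N$ is a graded isomorphism, then $f\otimes \mathrm{id}_S$ is a graded isomorphism with inverse $f^{-1}\otimes\mathrm{id}_S$. Base change also commutes with finite direct sums as graded modules.

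The key computation is the graded isomorphism $R(\alpha)\otimes_R S\cong_{gr} S(\alpha)$. The map is $r\otimes s\mapsto \phi(r)s$, with inverse $s\mapsto 1\otimes s$; the unitality of $\phi$ is what makes these mutually inverse. For degrees, an element $r\in R(\alpha)_\lambda=R_{\alpha+\lambda}$ satisfies $\phi(r)\in S_{\alpha+\lambda}=S(\alpha)_\lambda$, so the map respects gradings once shifts are tracked.

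Putting these together, suppose
\[
R(\alpha_1)\oplus\cdots\oplus R(\alpha_m)\cong_{gr} R(\beta_1)\oplus\cdots\oplus R(\beta_n).
\]
Applying $-\otimes_R S$ gives
\[
S(\alpha_1)\oplus\cdots\oplus S(\alpha_m)\cong_{gr} S(\beta_1)\oplus\cdots\oplus S(\beta_n).
\]
Since $S$ has gr-IBN, this forces $m=n$, so $R$ has gr-IBN.

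The only delicate point is the bookkeeping of the grading on the tensor product and on the shifts, and confirming that unitality is required. Without unitality, $R\otimes_R S\cong \phi(1)S$ need not be all of $S$.
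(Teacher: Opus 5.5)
Your proof is correct and follows the same route as the paper: you base change along $R\to S$ via $-\otimes_R S$, identify $R(\alpha)\otimes_R S\cong_{gr} S(\alpha)$, and then use gr-IBN of $S$ to force $m=n$. You also make explicit the unitality of the homomorphism and the degree bookkeeping, which the paper leaves implicit; unitality is genuinely needed, since otherwise the zero map would be a counterexample, and the map $\Psi$ the paper uses later is indeed unital.
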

	\begin{proof}
		Assume that there are two systems of elements $\{\alpha_1, \dots, \alpha_m\}$ and $\{\beta_1, \dots, \beta_n\}$ of $\Gamma$ such that 
		$$
		R(\alpha_1)\oplus  \cdots \oplus R(\alpha_m) \cong_{gr} R(\beta_1)\oplus  \cdots \oplus R(\beta_n).
		$$
		Since $S$ can be regarded as a graded $R$-bimodule, we obtain
		$$
		\left(R(\alpha_1)\oplus  \cdots \oplus R(\alpha_m)\right) \otimes_R S \cong \left(R(\beta_1)\oplus  \cdots \oplus R(\beta_n)\right) \otimes_R S.
		$$
		Hence,
		$$
		S(\alpha_1)\oplus  \cdots \oplus S(\alpha_m) \cong_{gr} S(\beta_1)\oplus  \cdots \oplus S(\beta_n).
		$$
		Because $S$ has gr-IBN, it follows that $m = n$.
	\end{proof}

	We immediately identify some classes of graphs that the Leavitt path algebra associated have gr-IBN.
	Let $E$ be a graph. For a cycle $c$ and a sink $v$ in $E$, we write $c\Rightarrow v$ if there exists a path in $E$ which starts in $c$ and ends at $v$. A sink $v$ in $E$ is called {\it maximal} if there is no a cycle $c$ in $E$ such that $c\Rightarrow v$. A cycle $c$ in $E$ is called {\it maximal} if there is no a cycle $d$ in $E$ which is different from a cyclic permutation of $c$ such that $d\Rightarrow c$. The {\it predecessors} of a vertex $v$ in $E$ is the set $E_{\ge v} := \{w\in E^0\mid w\ge v\}$ and the {\it predecessors} of a cycle $c$ in $E$ is the set $E_{\ge v}$, where $v$ is an arbitrary vertex on $c$.
	\begin{cor}\label{maximal}
		Let $E$ be a row-finite graph. If $E$ has a maximal sink or cycle with finitely many predecessors then $L_K(E)$ has gr-IBN.
	\end{cor}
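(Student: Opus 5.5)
Throughout, let $H := E^0 \setminus E_{\ge v}$, where $v$ is the maximal sink, or a vertex on the maximal cycle $c$. The plan is to produce a graded ring homomorphism from $L_K(E)$ onto a graded ring that is easily seen to have gr-IBN, and then invoke Lemma \ref{r--s}. The target will be the Leavitt path algebra of the quotient graph $E/H$, which is just the full subgraph on the finite predecessor set $E_{\ge v}$.

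\textbf{Step 1: $H$ is hereditary and saturated.} Hereditary: if $w \notin E_{\ge v}$ and $w \to u$, then $u \notin E_{\ge v}$, since otherwise $w \ge v$. Saturated: a regular vertex all of whose edges land in $H$ cannot reach $v$, unless it is $v$ itself. In the sink case $v$ is not regular. In the cycle case $v$ has an edge along $c$ into $E_{\ge v}$. So $H$ is hereditary saturated, and the ideal $I(H)$ generated by $H$ is a graded ideal with
\[
L_K(E)/I(H) \cong_{gr} L_K(E/H).
\]
This is the standard graded isomorphism for a row-finite graph. Hence there is a graded surjection $L_K(E) \to L_K(F)$, where $F = E/H$ is the restriction of $E$ to $E_{\ge v}$. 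Since $E$ is row-finite and $E_{\ge v}$ is finite, $F$ is a finite graph.

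\textbf{Step 2: structure of $F$.} In the sink case, maximality says no cycle reaches $v$, so no vertex of $F$ lies on a cycle: such a vertex reaches $v$, and then its cycle would satisfy ${}\Rightarrow v$. Thus $F$ is finite acyclic with $v$ a sink. In the cycle case, every cycle of $F$ reaches $c$, so by maximality it is a cyclic permutation of $c$. Hence $F$ has exactly one cycle $c$, with no exit inside $F$, since an exit would leave $c$ and still lie in $E_{\ge v}$, creating another cycle through $v$, or a path from $c$ back to $c$, giving a different cycle.

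\textbf{Step 3: a further graded homomorphism to a ring with IBN.} Rather than analysing $L_K(F)$ fully, I compose with a second map. In the cycle case, $\{v\}$-style arguments give $L_K(F)$ a graded ideal $M_m(K[x,x^{-1}])$ with suitable grading (the ideal generated by $c^0$). I would instead use the graded homomorphism $L_K(F) \to \operatorname{End}$ realised by the corner: the simplest route is the standard fact that $L_K(F) \cong_{gr} M_m(K[x^{k},x^{-k}])(\bar d)$ is a graded matrix ring, since $F$ is a no-exit graph with unique cycle and sink-free. In the sink case, $L_K(F) \cong_{gr} M_m(K)(\bar d)$ by the acyclic structure theorem, using that $v$ is the unique sink. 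Graded matrix rings over a commutative ring are unital and commutative-based, hence have IBN: map to a residue field and compare dimensions. By Corollary \ref{UGNIBNgrIBN} they have gr-IBN. Lemma \ref{r--s} applied to $L_K(E) \to L_K(F)$ finishes.

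The main obstacle is Step 2/3 in the cycle case: verifying that $F$ has no sinks and a single exitless cycle, so that the graded structure theorem applies cleanly. If this fails, I would instead use directly the ungraded unital homomorphism $L_K(F) \to M_m(K[x,x^{-1}])$, which gives IBN for $L_K(F)$ and hence gr-IBN via Corollary \ref{UGNIBNgrIBN}, but only after composing carefully with the graded quotient map.
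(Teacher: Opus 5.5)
Your proof is correct, but it takes a different route from the paper's. The paper's proof is two lines. It quotes Corollary 3.9 of Nam--Phuc to get that $L_K(E)$ has ungraded IBN, and then applies Corollary~\ref{UGNIBNgrIBN}.

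You instead open that black box. You pass to the graded quotient $L_K(E)\to L_K(E/H)$ with $H=E^0\setminus E_{\ge v}$, and identify $E/H$ as the finite full subgraph on $E_{\ge v}$. That subgraph is either acyclic with unique sink $v$, or a comet whose only cycle is the exitless cycle $c$. The standard structure theorems then make $L_K(E/H)$ a matrix ring over $K$ or over $K[x,x^{-1}]$. So $L_K(E/H)$ has IBN and hence gr-IBN, and you pull this back with Lemma~\ref{r--s}.

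What your approach buys is a self-contained argument that uses only the quotient isomorphism and the structure theorems. With the ungraded analogue of Lemma~\ref{r--s}, it in fact reproves the cited IBN statement itself. The paper's version buys only brevity.

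Some streamlining is possible:
\begin{enumerate}
\item You never need the graded structure theorems (the $M_m(K[x^k,x^{-k}])(\bar d)$ form). You only use the ungraded isomorphism to get IBN, and then Corollary~\ref{UGNIBNgrIBN} supplies gr-IBN. For the same reason, the muddled opening sentences of Step 3 can be deleted.
\item The obstacle you flag is immediate. Every vertex $w\neq v$ of $F$ has a path of positive length to $v$, and the first edge of that path stays in $F$. In the cycle case, $v$ itself emits its edge of $c$. So $F$ has no sinks in the cycle case, and $v$ is the only sink in the sink case.
\item The exit argument is cleanest stated directly. An exit $f$ at a vertex of $c$, together with a path from $r(f)$ back to $c^0$, yields a cycle containing $f$, and that cycle is not a rotation of $c$.
\item If $E^0$ is infinite, $L_K(E)$ is not unital. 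Lemma~\ref{r--s} still applies, because $L_K(E)$ has degree-$0$ local units (finite sums of vertices) and the quotient map is onto the unital ring $S=L_K(E/H)$. Hence $R(\alpha)\otimes_R S\cong_{gr}S(\alpha)$ as the lemma requires.
\end{enumerate}
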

	\begin{proof}
		By \cite[Corollary 3.9]{np:tsolpaatibnp}, $L_K(E)$ has IBN. Then, Corollary \ref{UGNIBNgrIBN} leads to $L_K(E)$ having gr-IBN.
	\end{proof}
	Using Corollary \ref{maximal}, we imply the gr-IBN property of the Leavitt path algebra of a finite graph in which every vertex is in at most one cycle. In \cite{aajz:lpaofgkd} the authors showed that the Leavitt path
	algebra of such a graph has finite Gelfand-Kirillov dimension and vice versa. 
	
	\begin{cor}\label{atmost}
		Let $K$ be a field and $E$ a finite graph in which every vertex is in at most one cycle.	Then $L_K(E)$ has gr-IBN.
	\end{cor}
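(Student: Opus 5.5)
The plan is to reduce Corollary \ref{atmost} directly to Corollary \ref{maximal}. That is, we show that a finite graph $E$ in which every vertex lies on at most one cycle always has a maximal sink or a maximal cycle. Since $E$ is finite, every sink and every cycle automatically has finitely many predecessors, so the hypothesis of Corollary \ref{maximal} holds and $L_K(E)$ has gr-IBN. If $E$ has no vertices at all, the statement is vacuous or degenerate, so we assume $E^0\neq\emptyset$.

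\textbf{Case 1: $E$ has no cycles.} Since $E$ is finite and acyclic, any path has length at most $|E^0|-1$. Take a path of maximal length; its range is a sink $v$ (a path of length zero at a vertex is fine when there are no edges). Because there are no cycles at all, no cycle $c$ satisfies $c\Rightarrow v$, so $v$ is a maximal sink.

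\textbf{Case 2: $E$ has at least one cycle.} Consider the relation on the finite set of cycles, with cyclic permutations identified, given by $d\Rightarrow c$, meaning there is a path from a vertex of $d$ to a vertex of $c$. The key claim is that this relation is acyclic on distinct cycles. Suppose $d\Rightarrow c$ and $c\Rightarrow d$ with $c\neq d$. Then concatenating a path from $d$ to $c$, a path along $c$, a path from $c$ back to $d$, and a path along $d$ produces a closed path. Choosing a vertex $w$ on $c$, this closed path, together with $c$ itself, gives two distinct closed simple paths based at $w$. Hence $w$ lies on two different cycles, contradicting the hypothesis. So $\Rightarrow$ is a strict partial order on finitely many cycles. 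Therefore some cycle $c$ has no other cycle $d$ with $d\Rightarrow c$, and such a $c$ is a maximal cycle.

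\textbf{Main obstacle.} The only delicate point is the combinatorial step in Case 2: extracting from mutual reachability of two distinct cycles a vertex lying on two different cycles, in the sense of the paper's definition of a cycle as a closed path with distinct source vertices. I would handle this as follows. Take the closed path $q$ through $w$ obtained above, which leaves $c$ at some point because it visits $d\neq c$. Shorten $q$ to a closed simple path based at $w$ that is not $c$. Then remove repeated vertices by cutting out inner loops, keeping a loop that still exits $c$. The resulting cycle either contains a vertex of $c$ and differs from $c$, or is an inner loop that yields a vertex on two cycles after a similar argument. Either way, some vertex lies on two cycles. With that established, Corollary \ref{maximal} finishes the proof.
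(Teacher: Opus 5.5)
Your proposal is correct and takes the same route as the paper, which simply deduces this corollary from Corollary~\ref{maximal} without explaining why such a graph must have a maximal sink or a maximal cycle; your case analysis supplies exactly that verification. Your loosely argued combinatorial step can be made crisp as follows: if distinct cycles $c,d$ satisfy $d\Rightarrow c$ and $c\Rightarrow d$ and share no vertex (otherwise you are done at once), let $e$ be the first edge of a shortest path from $c^0$ to $d^0$, so $w=s(e)\in c^0$ and $r(e)\notin c^0$, and let $\nu$ be a shortest path from $r(e)$ back to $w$; then $e\nu$ is a cycle through $w$ containing an edge not in $c$, so $w$ lies on two cycles.
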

	
	\subsection{Graph monoids and IBN criterion}
	
	We recall the graph monoid according to \cite{amp:nktfga} and a criterion for $L_K(E)$ having IBN in \cite{np:tsolpaatibnp}.
	\begin{defn}[{\cite[page 163]{amp:nktfga}}]\label{graph}
	Let $E$ be a row-finite graph. The \emph{graph monoid} of $E$, denoted $M_{E}$, is the commutative monoid generated by $\left\{v \mid v \in E^{0}\right\}$, subject~to
 	\begin{equation*}
 	v=\sum_{e \in s^{-1}(v)} r(e)
	 \end{equation*}
	for every $v \in E^{0}$ that is not a sink.
	\end{defn}
	\begin{lem}[{\cite[Lemma 4.3]{amp:nktfga}, Confluence Lemma}] \label{Confluence}
		Let $E$ be a row-finite graph. Then, $a=b$ in $M_{E}$ if and only if there exists $c \in F_{E}$ such that $a \rightarrow c$ and $b \rightarrow c$. $($Note that, in this case, $a=b=c$ in $M_{E}$.$)$
	\end{lem}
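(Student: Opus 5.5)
The approach is to realize $M_E$ as the quotient of the free commutative monoid $F_E$ on $E^0$ by the congruence generated by the one-step rewriting relation, and then to show that ``having a common descendant'' is already a congruence. Recall that for $a,b\in F_E$ we write $a\rightarrow_1 b$ if $a=x+v$ for some non-sink $v\in E^0$ and some $x\in F_E$, and $b=x+\sum_{e\in s^{-1}(v)} r(e)$. We write $\rightarrow$ for the reflexive–transitive closure of $\rightarrow_1$. By Definition \ref{graph}, $M_E=F_E/\!\sim_{\mathrm{cong}}$, where $\sim_{\mathrm{cong}}$ is the congruence generated by the pairs $(a,b)$ with $a\rightarrow_1 b$. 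Define $a\approx b$ to mean that there is $c\in F_E$ with $a\rightarrow c$ and $b\rightarrow c$. The lemma asserts exactly that $\approx$ coincides with $\sim_{\mathrm{cong}}$.

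The inclusion $\approx\ \subseteq\ \sim_{\mathrm{cong}}$ is immediate. If $a\rightarrow c$, then $a$ and $c$ are joined by a chain of $\rightarrow_1$ steps, so $a=c$ in $M_E$. Likewise $b=c$, and hence $a=b$ in $M_E$. For the reverse inclusion it suffices to show that $\approx$ is a congruence containing $\rightarrow_1$, because $\sim_{\mathrm{cong}}$ is the smallest such congruence. Several properties are easy. Containment of $\rightarrow_1$ holds by taking $c=b$. Reflexivity and symmetry are clear. Compatibility with addition follows because $a\rightarrow c$ implies $a+x\rightarrow c+x$: each rewriting step can be carried out inside a larger sum.

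The only real point is transitivity, which reduces to a confluence (diamond) property. Suppose $a\approx b$ via $c$ and $b\approx d$ via $c'$. Then $b\rightarrow c$ and $b\rightarrow c'$, so we need some $e$ with $c\rightarrow e$ and $c'\rightarrow e$; then $a\rightarrow e$ and $d\rightarrow e$.

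I would prove this confluence in three stages.
\begin{enumerate}
\item[(i)] \emph{One-step diamond.} Suppose $a=x+v\rightarrow_1 x+r(v)$ and $a=y+w\rightarrow_1 y+r(w)$, where $r(u)$ denotes $\sum_{e\in s^{-1}(u)}r(e)$. If $v=w$, then $x=y$ by cancellativity of the free monoid, and the two results agree. If $v\neq w$, then $w$ occurs in $x$, so $a=z+v+w$ for some $z\in F_E$. Both results then rewrite in one step to $z+r(v)+r(w)$.
\item[(ii)] \emph{Strip lemma.} By induction on the length $n$ of the chain $a\rightarrow_1\cdots\rightarrow_1 c$, a one-step reduct and an $n$-step reduct of $a$ have a common reduct. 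The common reduct is reached in at most $n$ steps from the first and at most one step from the second.
\item[(iii)] \emph{Full confluence.} Induct on the length of the second chain, tiling the grid with the diamonds from (ii).
\end{enumerate}
Since (i) gives a strong diamond property, with at most one step on each side, no termination hypothesis is needed. This matters because $\rightarrow$ need not terminate when $E$ has cycles.

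I expect the main obstacle to be bookkeeping rather than a conceptual gap. One has to be careful in stage (i) that a vertex $v$ occurring with multiplicity in $a$ is handled correctly, and that the case $v=w$ is genuinely trivial. Once confluence is established, $\approx$ is a congruence, so $\approx\ =\ \sim_{\mathrm{cong}}$. The parenthetical remark $a=b=c$ in $M_E$ then follows from the first inclusion.
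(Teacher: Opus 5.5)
Your proof is correct and complete. The paper itself gives no argument for this lemma; it simply quotes \cite[Lemma 4.3]{amp:nktfga}, and the proof there (as I recall it) is organized differently from yours. Ara, Moreno and Pardo first prove a splitting lemma: if $a_1+a_2\rightarrow b$, then $b=b_1+b_2$ with $a_1\rightarrow b_1$ and $a_2\rightarrow b_2$. They then induct directly on the length of a zig-zag chain $a=a_0,a_1,\dots,a_n=b$ whose consecutive terms are related by $\rightarrow_1$ in one direction or the other. Suppose $\lambda$ is a common descendant of $a$ and $a_{n-1}$. The only nontrivial case is $a_{n-1}=x+v\rightarrow_1 x+r(v)=b$. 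Split $\lambda=\lambda_x+\lambda_v$ with $x\rightarrow\lambda_x$ and $v\rightarrow\lambda_v$, and use that a single vertex has only one one-step reduct. Then either $\lambda_v=v$, in which case $\lambda_x+r(v)$ is a common descendant of $b$ and $\lambda$, or $r(v)\rightarrow\lambda_v$, in which case $b\rightarrow\lambda$ already. So the splitting lemma does in one induction what your stages (i)--(iii) do, and there is no need to check separately that $\approx$ is a congruence. Your route is the generic abstract-rewriting argument. It confines the graph-specific content to a trivial one-step diamond: rewrites at distinct vertex occurrences commute, and a repeated vertex gives the same result. It then makes explicit why strong local confluence makes termination unnecessary, which is the real subtlety when $E$ has cycles. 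The splitting lemma, by contrast, is a stronger structural fact about $\rightarrow$. It is exactly what one needs to deduce that $M_E$ is a refinement monoid, and your approach would have to prove it separately for that purpose.
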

	
	Ara et al \cite{amp:nktfga} showed that $\mathcal{V}(L_K(E))\cong M_E$. Based on this result, Nam and the author give a criterion for Leavitt path algebras of finite graphs to have IBN in~\cite{np:tsolpaatibnp}.
	
	\begin{thm}[{\cite[Theorem 3.1]{ko:clpaatibnp} and \cite[Theorem 3.5]{np:tsolpaatibnp}}]\label{IBN}
		Let $E$ be a finite graph having vertices $\{v_1, v_2, \cdots, v_h\}$ such that the regular
		vertices appear as $v_1,..., v_z$. Let 
		\[J_E = \left(
		\begin{array}{cc}
			I_z & 0 \\
			0 & 0 \\
		\end{array}
		\right)
		\in M_{h}(\mathbb{N})\ \text{ and }\ b= (1 \ ...\ 1)^t
		\in M_{h\times 1}(\mathbb{N}),\] and $[A^t_E - J_E \ \ b]$ the matrix obtained from
		the matrix $A^t_E - J_E$ by adding the column $b$. Then $L_K(E)$ has IBN if and only if \[\mathrm{rank}(A^t_E - J_E)<
		\mathrm{rank}([A^t_E - J_E \ \ b]).\]
	\end{thm}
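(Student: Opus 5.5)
The plan is to translate IBN into a statement about the graph monoid $M_E$ and then linearize it. By \cite{amp:nktfga} we have $\mathcal{V}(L_K(E))\cong M_E$, and under this isomorphism $[L_K(E)]=\big[\bigoplus_{v\in E^0} vL_K(E)\big]$ corresponds to $\mathbf{1}:=\sum_{i=1}^h v_i$. Hence $L_K(E)$ fails IBN if and only if $m\mathbf{1}=n\mathbf{1}$ in $M_E$ for some $m\neq n$. We identify $\mathbb{N}^h$ with the free commutative monoid on $E^0$. The defining relations of $M_E$ say that for a regular vertex $v_j$ we have $v_j=\sum_i a_{ji}v_i$. In vector form, the $j$-th column of $A_E^t-J_E$ (for $j\le z$) is exactly the difference of the two sides of this relation; for $j>z$ that column is zero.

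($\Rightarrow$, contrapositive) Suppose $m\mathbf{1}=n\mathbf{1}$ in $M_E$ with $m\neq n$. Consider the group $G=\mathbb{Z}^h/\operatorname{im}_{\mathbb{Z}}(A_E^t-J_E)$. The natural map $\mathbb{N}^h\to G$ kills every defining relation of $M_E$, so it factors through $M_E$. Therefore $(m-n)b\in \operatorname{im}_{\mathbb{Z}}(A_E^t-J_E)$. Over $\mathbb{Q}$ this means $b$ lies in the column space of $A_E^t-J_E$, so $\mathrm{rank}(A_E^t-J_E)=\mathrm{rank}([A_E^t-J_E\ \ b])$.

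($\Leftarrow$, contrapositive) Suppose the two ranks are equal. Then $b=(A_E^t-J_E)q$ for some $q\in\mathbb{Q}^h$. Clearing denominators gives $kb=(A_E^t-J_E)x$ with $k\in\mathbb{Z}_{>0}$ and $x\in\mathbb{Z}^h$; we may take $x_j=0$ for $j>z$, since those columns vanish. Write $x=x^+-x^-$ with $x^\pm\in\mathbb{N}^h$ supported on regular vertices.

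The key observation is that $J_Ey=A_E^ty$ in $M_E$ for every $y\ge 0$ supported on regular vertices. This holds because it is just a sum of defining relations.

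Choose $M$ large enough that $u:=Mb-J_Ex^+-J_Ex^-\in\mathbb{N}^h$. In $M_E$ we then compute
\[
M\mathbf{1}=u+J_Ex^++J_Ex^-=u+A_E^tx^++J_Ex^-.
\]
The vector identity $A_E^tx^+ + J_Ex^- = kb + J_Ex^+ + A_E^tx^-$ holds in $\mathbb{N}^h$. Substituting it, and then using the key observation once more,
\[
M\mathbf{1}=u+J_Ex^++A_E^tx^-+k\mathbf{1}=u+J_Ex^++J_Ex^-+k\mathbf{1}=(M+k)\mathbf{1}.
\]
Since $k\neq 0$, this shows $L_K(E)\cong L_K(E)^{M+k}$ while $M\ne M+k$, so IBN fails.

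The only delicate point is the passage from a rational (group-level) relation back to an honest equality in the monoid $M_E$. The padding trick with $M\mathbf{1}$, which keeps all vectors nonnegative, is what handles this. The Confluence Lemma is not even needed here, because we only move from $M_E$ to its group completion in one direction and construct explicit monoid equalities in the other.
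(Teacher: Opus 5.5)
Your argument is correct. The paper itself gives no proof of this theorem; it is quoted from \cite{ko:clpaatibnp} and \cite{np:tsolpaatibnp}. So the natural comparison is with the monoid machinery the paper builds around it: the ungraded form of Lemma~\ref{gr-language}, together with the Confluence Lemma~\ref{Confluence}, which is also how the paper proves its graded analogue, Theorem~\ref{thm-nogribn}.

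On that route one would use confluence to find a common descendant of $m\mathbf{1}$ and $n\mathbf{1}$. One would then read off $mb+(A_E^t-J_E)x=nb+(A_E^t-J_E)y$ with $x,y\ge 0$. You instead push the equality into the group $\mathbb{Z}^h/\operatorname{im}_{\mathbb{Z}}(A_E^t-J_E)$, which is the group completion of $M_E$, i.e.\ $K_0(L_K(E))$. This uses only the presentation of $M_E$, not confluence. For the converse, you lift a rational relation back to an honest monoid equality by padding with $M\mathbf{1}$. This works precisely because $\mathbf{1}$ is an order unit in $\mathbb{N}^h$, so the term $x^++x^-$, which cannot be cancelled in $M_E$, can be absorbed.

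In effect you have reproved, in this special case, the general fact that a unital ring $R$ has IBN iff $[R]$ has infinite order in $K_0(R)$, combined with $K_0(L_K(E))\cong\mathbb{Z}^h/\operatorname{im}(A_E^t-J_E)$ sending $[L_K(E)]\mapsto b$. This is shorter and avoids any analysis of the relation $\rightarrow$. The confluence route instead produces explicit nonnegative witnesses, and the paper exploits these in the graded setting to push everything to a common level $\delta$.

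One cosmetic point: your direction labels are swapped. The part headed ($\Rightarrow$, contrapositive) shows ``IBN fails $\Rightarrow$ equal ranks'', which is the $\Leftarrow$ direction of the theorem, and the other part proves $\Rightarrow$. Since both directions are established, nothing is lost.
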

%The relations defining $M_{E}$ can be described more concretely as follows: We denote by $F_{E}$ the free commutative monoid generated by $E^{0}$.
% First, define a relation $\rightarrow_{1}$ on $F_{E}$ by setting, for each $v \in E^{0}$,
%
%$$
%v \rightarrow_{1} \sum_{e \in s^{-1}(v)} r(e) .
%$$
%
%Then $M_{E}$ is the quotient of $F_{E}$ by the congruence generated by $\rightarrow_{1}$.
%
%Let $\rightarrow$ be the smallest reflexive, transitive and additive relation on $F_{E}$ which contains (is coarser than) $\rightarrow_{1}$. Note that $\rightarrow$ is not symmetric, so it is not a congruence.

\subsection{ The talented monoid and covering graph}

We recall the talented monoid $T_{E}$ of $E$, which encodes the graded structure of a Leavitt path algebra $L_K(E)$ as~well.

\begin{defn}[{\cite[page 436]{hl:tmLpa}}]\label{talent}
Let $E$ be a row-finite directed graph. The talented monoid of $E$, denoted $T_{E}$, is the commutative monoid generated by $\left\{v(\alpha) \mid v \in E^{0}, \alpha \in \mathbb{Z}\right\}$, subject to

\begin{equation}\label{TE}
v(\alpha)=\sum_{e \in s^{-1}(v)} r(e)(\alpha+1)	
\end{equation}
for every $\alpha \in \mathbb{Z}$ and every regular vertex $v\in E^0$.
\end{defn}
The additive group $\mathbb{Z}$ of integers acts on $T_{E}$ via monoid automorphisms by shifting indices: For each $n, \alpha \in \mathbb{Z}$ and $v \in E^{0}$, define ${ }^{n} v(\alpha)=v(\alpha+n)$, which extends to an action of $\mathbb{Z}$ on $T_{E}$.

The talented monoid of a graph can also be seen as a special case of a graph monoid, which we now describe. The {\it covering graph} of $E$ is the graph $\overline E$ with vertex set $\overline E^{0}=E^{0} \times \mathbb{Z}$, and edge set $\overline E^{1}=E^{1} \times \mathbb{Z}$. The range and source maps are given as

$$
s(e, \alpha)=(s(e), \alpha), \quad r(e, \alpha)=(r(e), \alpha+1) .
$$

Note that the graph monoid $M_{\overline E}$ has a natural $\mathbb{Z}$-action by ${ }^{n}(v, \alpha)=(v, \alpha+n)$. 

The following proposition justifies our use of the talented monoid as the main tool for studying gr-IBN: it shows that graded module isomorphisms in $\mathcal{V}^{\text{gr}}(L_K(E))$ correspond precisely to equalities in $T_E$.
\begin{prop}[{\cite[Proposition 5.7]{Ara-Hazrat-Li-Sims} and \cite[Lemma 3.2]{hl:tmLpa}}]\label{TEME}
	Let $E$ be an arbitrary graph. Then there are $\mathbb{Z}$-monoid isomorphisms:
	$$
	\begin{array}{ccccc}
		T_{E} & \cong & M_{\overline E} & \cong & \mathcal{V}^{\text{gr}}(L_K(E))  \\
		v(\alpha) & \longmapsto &(v, \alpha)&\longmapsto&L_K(E)v(\alpha).\\
	\end{array}
	$$
\end{prop}

By Proposition \ref{TEME}, throughout the paper we freely identify elements of $T_E$ with their images in $M_{\overline E}$. A monoid is called {\it conical} if $a + b = 0$ implies $a = b = 0$ and it is called {\it cancellative} if $a + b = a + c$
implies $b = c$. By \cite[Corollary 5.8]{h:tgslpa}, $T_{E}$ is also conical and cancellative. This fact may also be proved directly using the Confluence Lemma \ref{Confluence}.
%Consequently, results on $M_{\overline E}$, such as Lemma \ref{Confluence}, also hold for $T_E$.

\begin{exas}\label{exTE}
Consider the following graphs 
\medskip $$\xymatrix{E:& \bullet_u \ar@(dl,ul)^e \ar@/^.5pc/[r]^f& \bullet_v \ar@/^.5pc/[l]^g}$$ 
\medskip
Then, the covering graph is
$$\xymatrix{& ... & Level -1 & Level\ 0 & Level\ 1 & Level\ 2 & ...\\
	\overline E:& ... & (u,-1) \ar[r]^{(e,-1)} \ar[dr]^{(f,-1)}   & (u,0) \ar[r]^{(e,0)} \ar[dr]^{(f,0)}     &  (u,1) \ar[r]^{(e,1)} \ar[dr]^{(f,1)}    & (u,2)    &  ...\\
	& ... & (v,-1) \ar@/_.7pc/[ur]_{(g,-1)}   & (v,0) \ar@/_.7pc/[ur]_{(g,0)}      &  (v,1) \ar@/_.7pc/[ur]_{(g,1)}    & (v,2)   &  ...}$$

The relation (\ref{TE}) gives that $u(1)=u(2)+v(2)$, $v(1)=u(2)$. Hence, 
$$
u(1)+v(1)=2u(2)+v(2)\text{ in } T_E.
$$
In other words, 
\begin{equation*}
	u(1)+v(1)=(1, 1)_{1\times 2}\left(\begin{tabular}{cc}
		1 & 1\\
		1 & 0\\
	\end{tabular}\right)\left(\begin{tabular}{c}
		$u(2)$\\
		$v(2)$\\
	\end{tabular}\right)=(1, 1)_{1\times 2}A_E\left(\begin{tabular}{c}
		$u(2)$\\
		$v(2)$\\
	\end{tabular}\right),
\end{equation*} where $(1, 1)_{1\times 2}$ denotes the all-ones row vector of size $2$ and $A_E$ is the adjacency matrix of $E$. \hfill $\Box$ \end{exas}
%	By iterating Lemma \ref{write}, in the absence of sinks, elements of the talented monoid can be shifted to arbitrarily higher levels. This observation will be used extensively in proving the main results of this paper in the next sections.

%Note that $M_{E}$ is the quotient of $T_{E}$ obtained by identifying elements of $T_{E}$ which belong to the same $\mathbb{Z}$-orbit. The respective quotient map,
%
%$$
%\begin{aligned}
%	T_{E} & \longrightarrow M_{E} \\
%	v(i) & \longmapsto v,
%\end{aligned}
%$$
%is also called the forgetful homomorphism. It follows that any $\mathbb{Z}$-monoid homomorphism between $T_{E}$ and $T_{F}$, for row-finite graphs $E$ and $F$, induces a monoid homomorphism between $M_{E}$ and $M_{F}$.

%We note that if $\phi: E \rightarrow F$ is a complete graph homomorphism, then $\phi$ extends to a natural $\mathbb{Z}$-monoid homomorphism $\bar{\phi}: T_{E} \rightarrow T_{F}$. In the case of Leavitt path algebras, the map $\phi$ induces an injective ring homomorphism $\bar{\phi}: L_{\mathrm{k}}(E) \rightarrow L_{\mathrm{k}}(F)$. However injectivity does not follow in the setting of talented monoids, as the following example shows. For the graphs $E$ and $F$,
%the $\mathbb{Z}$-monoid homomorphism $\bar{\phi}: T_{E} \rightarrow T_{F}$ is not injective, as in $T_{E}$ we have $u \neq u(2)+u(2)$, whereas their images under $\bar{\phi}$ coincide.

%\section{Leavitt path algebras having IBN}
%BO MUC NAY, NOI NGAN GON TRONG MUC 5 HOAC MUC GIOI THIEU

\section{Leavitt path algebras having Graded Invariant Basis Number}
	In this section, we study Graded Invariant Basis Number for Leavitt path
	algebras via the talented monoid. The main goal is to establish a matrix-theoretic characterization of when this property fails, which is achieved in
	Theorem \ref{thm-nogribn}.

%
%\begin{defn}[{\cite[Definition 3.1]{IBNgraded}}]
%	A unital ring $R$ is said to have \emph{graded Invariant Basis Number} (for short, gr-IBN) if,
%	for any pair of positive integers $m \text{ and } n$, $R^m \cong_{gr} R^n$ (as right modules) implies that $m=n$.
%\end{defn}

As mentioned in Section 2, we have a natural $\mathbb{Z}$-grading on $L_K(E)$. Then, we have the graded version of \cite[Lemma 3.4]{np:tsolpaatibnp}, which reduces the problem of gr-IBN for $L_K(E)$ to the study of equalities in the talented monoid $T_E$.
%Here, $\left[\sum\limits_{i=1}^hv_i(\alpha_j)\right]$ corresponds to the class of the graded free module $\left[L_K(E)(\alpha_j)\right]$ in $T_E$.

\begin{lem}\label{gr-language}
	Let $E$ be a finite graph with vertex set $\{v_1, v_2, \cdots, v_h\}$. Then, 
	$L_K(E)$ has gr-IBN if and only if the following holds: for any pair of positive integers $m$, $n$, and any $\alpha_1,\cdots, \alpha_m, \beta_1, \cdots, \beta_n\in \mathbb{Z}$
		the equality $$\sum\limits_{j=1}^m\left(\sum\limits_{i=1}^hv_i(\alpha_j)\right)=\sum\limits_{l=1}^n\left(\sum\limits_{k=1}^hv_k(\beta_l)\right)
		\ \text{in } T_{E}
		$$
		implies $m = n$.
\end{lem}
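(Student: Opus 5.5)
The plan is to translate the defining condition of gr-IBN directly through the $\mathbb{Z}$-monoid isomorphism $T_E \cong \mathcal{V}^{\text{gr}}(L_K(E))$ of Proposition \ref{TEME}. No extra machinery is needed beyond bookkeeping of shifts, directions (left versus right modules) and idempotent decompositions.

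First, since $E$ is finite, $L_K(E)$ is unital with $1=\sum_{i=1}^h v_i$. The $v_i$ are pairwise orthogonal homogeneous idempotents of degree $0$. Hence for every $\alpha\in\mathbb{Z}$ we get a graded decomposition
\[
L_K(E)(\alpha)=\bigoplus_{i=1}^h v_iL_K(E)(\alpha) \quad\text{(or } L_K(E)v_i(\alpha)\text{ on the left)}.
\]
Under the isomorphism of Proposition \ref{TEME}, the class of $L_K(E)v_i(\alpha)$ corresponds to $v_i(\alpha)$. Therefore $[L_K(E)(\alpha)]$ corresponds to $\sum_{i=1}^h v_i(\alpha)$, and by additivity $[\bigoplus_{j=1}^m L_K(E)(\alpha_j)]$ corresponds to $\sum_{j}\sum_i v_i(\alpha_j)$.

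Two points need care here.

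\emph{Side of the modules.} The definition of gr-IBN is phrased for right modules, while Proposition \ref{TEME} is written with left modules $L_K(E)v(\alpha)$. I would handle this with the standard graded involution $*$ on $L_K(E)$. It is an anti-automorphism sending $L_K(E)_n$ to $L_K(E)_{-n}$, and it induces an equivalence between graded right and graded left modules that sends $R(\alpha)$ to $R(-\alpha)$ (up to the shift convention). So the right-module statement for the systems $\{\alpha_j\},\{\beta_l\}$ is equivalent to the left-module statement for the systems $\{-\alpha_j\},\{-\beta_l\}$. Since the condition in the lemma quantifies over all integers, it is invariant under negating all shifts. Alternatively, one can note that $\mathcal{V}^{\text{gr}}$ computed with right modules is also isomorphic to $T_E$ via $v(\alpha)\mapsto vL_K(E)(\alpha)$, possibly with a sign change in $\alpha$.

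\emph{Shift convention.} The lemma quantifies over all tuples, so whichever sign convention Proposition \ref{TEME} uses for $v(\alpha)$ is harmless.

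Finally, graded isomorphism classes of finitely generated graded projective modules are exactly the elements of $\mathcal{V}^{\text{gr}}$. Hence
\[
\bigoplus_j R(\alpha_j)\cong_{gr}\bigoplus_l R(\beta_l)
\]
holds if and only if the corresponding elements of $T_E$ are equal, which gives both directions of the equivalence simultaneously.

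The main obstacle is only the left/right and sign bookkeeping. I would resolve it using the involution argument above, noting that it maps the set of all pairs of systems onto itself bijectively while preserving $m$ and $n$.
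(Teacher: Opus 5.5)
Your proof is correct and follows the paper's argument: you use $1=\sum_{i=1}^h v_i$ to identify $[L_K(E)(\alpha)]$ with $\sum_{i=1}^h v_i(\alpha)$ under the isomorphism $T_E\cong\mathcal{V}^{\text{gr}}(L_K(E))$ of Proposition~\ref{TEME}, so that a graded isomorphism between the two sums of shifted free modules is the same as an equality in $T_E$. Your extra bookkeeping, which passes between right and left graded modules via the involution (turning $R(\alpha)$ into $R(-\alpha)$) and notes that the condition is invariant under negating all shifts, addresses a point the paper passes over silently, and you handle it correctly.
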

\begin{proof}
Since $E$ is finite graph, $1_{L_K(E)}=\sum\limits_{i=1}^hv_i$. So, $\sum\limits_{i=1}^hv_i$ represents the identity element $\left[L_K(E)\right]$ in $\mathcal{V}(L_K(E))$, 
and $\sum\limits_{i=1}^hv_i(\alpha_j)$ represents the class $\left[L_K(E)(\alpha_j)\right]$ in $\mathcal{V}^{\text{gr}}(L_K(E))$.
	
	Assume that 
	\begin{equation*}
		L_K(E)(\alpha_1)\oplus \cdots \oplus L_K(E)(\alpha_m)\cong_{gr}L_K(E)(\beta_1)\oplus \cdots \oplus L_K(E)(\beta_n)
	\end{equation*} for some $m, n\in \mathbb{N}$ and $\alpha_j, \beta_l\in \mathbb{Z}, 1\leq j \leq m, 1\leq l \leq n.$ Equivalently, we have $$[L_K(E)(\alpha_1)]+\cdots + [L_K(E)(\alpha_m)]=[L_K(E)(\beta_1)]+\cdots + [L_K(E)(\beta_n)] \text{ in } \mathcal{V}^{\text{gr}}(L_K(E)).$$ Equivalently, we have
	$$\sum\limits_{j=1}^m\left(\sum\limits_{i=1}^hv_i(\alpha_j)\right)=\sum\limits_{l=1}^n\left(\sum\limits_{k=1}^hv_k(\beta_l)\right) \text { in } T_{E}.$$ Conversely, by Proposition \ref{TEME}, any equality of the above form in $T_E$  corresponds to a graded module isomorphism	
	$$
	L_K(E)(\alpha_1)\oplus \cdots \oplus L_K(E)(\alpha_m)\cong_{gr}L_K(E)(\beta_1)\oplus \cdots \oplus L_K(E)(\beta_n).
	$$	
	Thus finishing the proof.
\end{proof}
The following result indicates that the Leavitt path algebra of graphs with sinks always has gr-IBN. And so, in investigating the gr-IBN property of Leavitt path algebras, we need only concentrate on the graphs without sinks.
\begin{prop}\label{hassink}
	Let $E$ be a finite graph. If $E$ has a sink, then $L_K(E)$ has gr-IBN.
\end{prop}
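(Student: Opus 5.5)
The plan is to use Lemma \ref{gr-language} and build a monoid homomorphism from $T_E$ to $\mathbb{N}$ that counts the sink. Fix a sink $w \in E^0$. For $x \in F_{\overline E}$ (the free commutative monoid on $\overline E^0$), write $\phi_w(x)$ for the total number of occurrences of vertices $(w,\gamma)$, $\gamma\in\mathbb{Z}$. This count is not invariant under the relations, since replacing $(v,\alpha)$ by $\sum_{e} (r(e),\alpha+1)$ can create new copies of $w$. So instead of a plain count I would use a weighted invariant. For each vertex $v$ and level $\alpha$, define $N(v)$ as the number of paths from $v$ to $w$ in $E$, allowing the trivial path, so $N(w)=1$. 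This is finite only when no cycle reaches $w$, which fails in general. Hence a naive path-count homomorphism to $\mathbb{N}$ does not work, and I would instead use a truncated, level-sensitive argument based on the Confluence Lemma \ref{Confluence}.

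The concrete route is as follows. Suppose
\[
\sum_{j=1}^m \sum_i v_i(\alpha_j) \;=\; \sum_{l=1}^n \sum_k v_k(\beta_l) \quad \text{in } T_E \cong M_{\overline E}.
\]
By Lemma \ref{Confluence}, both sides reduce to a common $c\in F_{\overline E}$ by applying moves $\rightarrow_1$, each replacing some $(v,\gamma)$ by its children at level $\gamma+1$. The key observation is that a vertex $(w,\gamma)$ with $w$ a sink is never rewritten. So the multiset of sink-vertices occurring in $c$ at a level $\gamma$ is produced only by copies at level $\gamma$ already present, plus descendants of vertices at lower levels.

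I would then compare the lowest levels. Let $\mu=\min(\alpha_j,\beta_l)$, and suppose it is attained by $p$ of the $\alpha_j$ and $q$ of the $\beta_l$. At level $\mu$, nothing from lower levels contributes, so the number of copies of $(w,\mu)$ in $c$ equals $p$ on one side and $q$ on the other. Hence $p=q$. By cancellativity of $T_E$, I can then cancel $p\sum_i v_i(\mu)$ from both sides, obtaining an equality of the same shape with $m-p$ and $n-p$ summands. Inducting on $m+n$ then gives $m=n$.

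The main obstacle is justifying the claim about copies of $(w,\mu)$ in $c$: the reduction from the left side must never create a level-$\mu$ vertex, and it must not consume the existing $(w,\mu)$. Both follow because moves only push mass to strictly higher levels and sinks are terminal. Cancellation relies on the conicality and cancellativity of $T_E$ recorded after Proposition \ref{TEME}. The base case, where one side is empty, is excluded since $m,n\ge1$; if after cancelling one side becomes zero, conicality forces the other side to vanish as well, giving $m=n$.
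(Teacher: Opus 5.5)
Your proof is correct and follows the paper's argument. The key idea is the same: a copy of the sink at the lowest level can neither be rewritten nor be produced from higher levels, so the minimal levels and their multiplicities agree on both sides, and cancellativity plus induction finish the proof.

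Your version is in fact slightly more careful than the paper's. It makes the Confluence Lemma step explicit and cancels all $p$ minimal summands at once, where the paper cancels one summand at a time. One small point: the final empty-side case needs, besides conicality, that each generator $v(\gamma)$ is nonzero in $T_E$. Your own sink count already supplies this, since an empty side reduces only to $0$, while a nonempty side keeps a copy of the sink at its minimal level in every reduct.
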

\begin{proof}
	Assume that $\{v_1, v_2, \cdots, v_h\}$ is the set of vertices of $E$, where $v_1$ is a sink, and 
	\begin{equation}\label{sink}
		\sum\limits_{j=1}^m\left(\sum\limits_{i=1}^hv_i(\alpha_j)\right)=\sum\limits_{l=1}^n\left(\sum\limits_{k=1}^hv_k(\beta_l)\right) \text{ in } T_{E} \text{ for some } \alpha_i, \beta_l\in \mathbb{Z}.
	\end{equation} 
	Since $T_E$ is a commutative monoid, we may rearrange the sums and assume without loss of generality that
	$\alpha_1\leq \cdots \leq \alpha_m$, $\beta_1\leq \cdots \leq \beta_n$.% and $\alpha_m \leq \beta_n$.
	
	Claim: $\alpha_1=\beta_1$. 
	Since $v_1$ is a sink, the relation (\ref{TE}) does not apply to $v_1$, that is, $v_1$ cannot be written as a sum of other generators at higher levels. More precisely, for any $\alpha \in \mathbb{Z}$, the element $v_1(\alpha)$ cannot be transformed in $T_E$ in the sense that it does not appear on the left-hand side of any relation (\ref{TE}).
	
	Now consider the element $v_1(\alpha_1)$ in the left-hand side of equation (\ref{sink}). Since $\alpha_1$ is the smallest level appearing on the left-hand side, the element $v_1(\alpha_1)$ cannot be obtained by applying relations (\ref{TE}) to any other generator. So, $v_1(\alpha_1)$ in the left-hand side cannot be changed.
	
	Similarly, in the right-hand side $v_1(\beta_1)$ appears at level $\beta_1$  (the smallest level on the right-hand side), and it also cannot be change. So, we must have $\alpha_1=\beta_1$  and the coefficient of $v_1(\alpha_1)$ on both sides must be equal (which is 1 in this case).
	
%	Since $v_1$ is a sink, the relation in Definition 3.3 is not applied for $v_1(\alpha_1)$ and $v_1(\beta_1)$. In the left side, at level $\alpha_1$, there is only an element $v_1(\alpha_1)$. And the coefficient of $v_1(\alpha_1)$ in the left side can not increase. This is similar for $v_1(\beta_1)$ in the right side. So, (\ref{sink}) implies that $\alpha_1=\beta_1$.
	
	%Since $T_E$ is cancellative, we obtain that 
	
Since $T_E$ is cancellative (by \cite[Corollary 5.8]{h:tgslpa}), we can cancel $\sum_{i=1}^hv_i(\alpha_1)$ from both sides of equation (\ref{sink}) to obtain:
	\begin{equation*}
		\sum\limits_{j=2}^m\left(\sum\limits_{i=1}^hv_i(\alpha_j)\right)=\sum\limits_{l=2}^n\left(\sum\limits_{k=1}^hv_k(\beta_l)\right) \text{ in } T_{E}.
	\end{equation*}
Repeating this process inductively, we obtain 
$
\alpha_2 = \beta_2,\ \alpha_3 = \beta_3,\ \text{and so on}.
$
Eventually, we must have $m = n$ (otherwise one side would be exhausted before the other, yielding a non-trivial element equal to zero in $T_E$, which is impossible since $T_E$ is conical).
Therefore, $L_K(E)$ has gr-IBN by Lemma \ref{gr-language}. Thus  finishing the proof.
\end{proof}

We illustrate the construction in the proof of Proposition \ref{hassink} with a specific example.
\begin{exas}\label{exgribn}
	Consider the following graph:
	
	\medskip
	$$\xymatrix{E: & u \ar@(ul,ur) \ar@(dl,dr) \ar[r]& v}$$
	\medskip
 
	We show that $L_K(E)$ has gr-IBN.
	Assume that
	
	\begin{equation}\label{exam5.7}
	\sum_{j=1}^{m}
	\left(
	u(\alpha_j)+v(\alpha_j)
	\right)
	=
	\sum_{l=1}^{n}
	\left(
	u(\beta_l)+v(\beta_l)
	\right)
	\end{equation}
	in $T_E$, where $\alpha_1,\ldots,\alpha_m,\beta_1,\ldots,\beta_n \in \mathbb{Z}$ are arbitrary.
	Without loss of generality, we may assume that
	$\alpha_1 \le \cdots \le \alpha_m$ and
	$\beta_1 \le \cdots \le \beta_n$.
	
	Then, the covering graph is
	$$\xymatrix{& ... & Level -1 & Level\ 0 & Level\ 1 & Level\ 2 & ...\\
		\overline E:& ... & (u,-1) \ar@/^.7pc/[r] \ar[r] \ar[dr]   & (u,0) \ar@/^.7pc/[r] \ar[r] \ar[dr]     &  (u,1) \ar@/^.7pc/[r] \ar[r] \ar[dr]    & (u,2)    &  ...\\
		& ... & (v,-1)    & (v,0)       &  (v,1)     & (v,2)   &  ...}$$
	
	The relation (\ref{TE}) gives that $u(\alpha)=2u(\alpha+1)+v(\alpha+1)$  in $T_E$ for all $\alpha \in \mathbb{Z}$.
	
	Since $v$ is a sink, the element $v(\alpha_1)$ appears at the smallest
	level on the left-hand side of (\ref{exam5.7}) and cannot be obtained by applying any relation (\ref{TE}) in the definition of $T_E$.
	Similarly, $v(\beta_1)$ on the right-hand side of (\ref{exam5.7}) cannot be transformed.
	Therefore, we must have $\alpha_1 = \beta_1$.
	
	By the cancellativity of $T_E$,
	we can cancel $u(\alpha_1)+v(\alpha_1)$ from both sides of (\ref{exam5.7}).
	Continuing inductively, we obtain
	$\alpha_2 = \beta_2$, $\alpha_3 = \beta_3$, and so on.
	If $m \ne n$, say $m < n$, then after canceling all $m$ terms on the left-hand side, at least one nonzero term remains on the right-hand side, contradicting the equality. Hence, $m = n$. Therefore, by Lemma \ref{gr-language}, $L_K(E)$ has gr-IBN. \hfill $\Box$
\end{exas}

	The following example shows that, within the class of Leavitt path algebras, the gr-IBN property is strictly weaker than the IBN property.
\begin{exas}\label{ex3.7}
	Consider the graph:
	\medskip
	$$\xymatrix{E: & u \ar@(ul,ur) \ar@(dl,dr) \ar[r]& v}$$
	\medskip
	
	Then, it is easy to see that
	$u+v=2(u+v)$
	in $M_E$. Hence, 
	$L_K(E)\cong L_K(E)^2$, and so, $L_K(E)$ has no IBN. By Theorem \ref{IBN}, we can also verify that $L_K(E)$ has no IBN by checking the following equality. 
	$$
	\mathrm{rank}(A^t_E - J_E)=
	\mathrm{rank}([A^t_E - J_E \ \ b])=1.
	$$ 
	But from Example \ref{exgribn}, $L_K(E)$ has gr-IBN. \hfill $\Box$
\end{exas}

Generalizing Example \ref{exTE}, we arrive at the following useful assertion.

\begin{lem} \label{write}
	Let $E$ be a finite graph without sinks, with vertex set
	$$
	E^0 = \{v_1,\dots,v_h\},
	$$
	and adjacency matrix $A_E$. Then, in $T_E$ we have
	\begin{equation*}
		v_1(\alpha)+v_2(\alpha)+\cdots +v_h(\alpha)=(1, 1, \cdots, 1)_{1\times h}A_E\left(\begin{tabular}{c}
			$v_1(\alpha +1)$\\
			$v_2(\alpha +1)$\\
			$\cdots$\\
			$v_h(\alpha +1)$\\
		\end{tabular}\right)
	\end{equation*} for all $\alpha \in \mathbb{Z}$,  where $(1, 1, \cdots, 1)_{1\times h}$ denotes the all-ones row vector of size $h$.
\end{lem}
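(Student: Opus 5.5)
The plan is to apply the defining relation (\ref{TE}) of $T_E$ once at every vertex and then regroup the resulting terms by their range vertex. Because $E$ has no sinks and is finite, every vertex $v_i$ is regular: $0<|s^{-1}(v_i)|<\infty$. So (\ref{TE}) applies to each $v_i$ and gives, for every $\alpha\in\mathbb{Z}$,
$$
v_i(\alpha)=\sum_{e\in s^{-1}(v_i)} r(e)(\alpha+1) \quad \text{in } T_E .
$$

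Next I would rewrite the right-hand side using the adjacency matrix. Split $s^{-1}(v_i)$ according to the range of each edge. The edges from $v_i$ to $v_k$ number exactly $a_{ik}$, and each of them contributes one copy of $v_k(\alpha+1)$. Hence
$$
v_i(\alpha)=\sum_{k=1}^h a_{ik}\, v_k(\alpha+1).
$$
This is the $i$-th row of the identity $(v_1(\alpha),\dots,v_h(\alpha))^t=A_E\,(v_1(\alpha+1),\dots,v_h(\alpha+1))^t$, where a nonnegative integer coefficient acting on an element of $T_E$ means repeated addition.

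Finally I would sum these $h$ equalities over $i$, which is legitimate since $T_E$ is a commutative monoid. The result is
$$
\sum_{i=1}^h v_i(\alpha)=\sum_{k=1}^h\Big(\sum_{i=1}^h a_{ik}\Big) v_k(\alpha+1).
$$
The coefficient $\sum_i a_{ik}$ is the $k$-th entry of the row vector $(1,\dots,1)A_E$, so the right-hand side is exactly $(1,\dots,1)_{1\times h}A_E\,(v_1(\alpha+1),\dots,v_h(\alpha+1))^t$, as claimed.

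There is no real obstacle in this argument. The only points that need care are that the no-sink hypothesis is what makes (\ref{TE}) applicable to every vertex, and that the matrix product is interpreted with $\mathbb{N}$-coefficients in the monoid $T_E$ rather than over a ring.
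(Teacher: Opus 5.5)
Your proposal is correct and matches the paper's proof: both apply relation (\ref{TE}) at each (necessarily regular) vertex to get $v_i(\alpha)=\sum_k a_{ik}v_k(\alpha+1)$, then sum over $i$ so that the column sums of $A_E$ become the coefficients. Your explicit remarks on why the no-sink hypothesis makes (\ref{TE}) applicable and on reading the matrix product with $\mathbb{N}$-coefficients in $T_E$ are points the paper leaves implicit.
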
 
\begin{proof}
	Let $A_E=\left(a_{ij}\right)_h$ be the adjacency matrix of $E$. For all $\alpha \in \mathbb{Z}$, the relation (\ref{TE}) gives that 
	$$
	v_i(\alpha)=a_{i1}v_1(\alpha+1)+a_{i2}v_2(\alpha+1)\cdots + a_{ih}v_h(\alpha+1),
	$$
	for each $1\leq i \leq h$. Summing over all $i$, the expression $v_1(\alpha)+v_2(\alpha)+\cdots +v_h(\alpha)$ 
	is equal to
	\begin{equation*}
		\begin{array}{c}
			\begin{array}{c}
				a_{11}v_1(\alpha+1)+a_{12}v_2(\alpha+1)\cdots + a_{1h}v_h(\alpha+1)+\\
				a_{21}v_1(\alpha+1)+a_{22}v_2(\alpha+1)\cdots + a_{2h}v_h(\alpha+1)+\\
				\cdots\\
				a_{h1}v_1(\alpha+1)+a_{h2}v_2(\alpha+1)\cdots + a_{hh}v_h(\alpha+1).\\
			\end{array}\\
		\end{array}
	\end{equation*}
	In other words,
	\begin{equation*}
		\begin{array}{rcl}
			v_1(\alpha)+v_2(\alpha)+\cdots +v_h(\alpha)&=&(1, 1, \cdots, 1)_{1\times h}A_E\left(\begin{tabular}{c}
				$v_1(\alpha +1)$\\
				$v_2(\alpha +1)$\\
				$\cdots$\\
				$v_h(\alpha +1)$\\
			\end{tabular}\right),\\
		\end{array}
	\end{equation*}
	where $(1, 1, \cdots, 1)_{1\times h}$ denotes the all-ones row vector of size $h$.
\end{proof}

We can now state and prove the main result of this paper, providing a characterization of the graphs whose associated Leavitt path algebras fail to have gr-IBN.
\begin{thm}\label{thm-nogribn}
	Let $E$ be a finite graph without sinks with adjacency matrix $A_E$. Then $L_K(E)$ has no gr-IBN if and only if there exist positive integers $m\neq n$ and   nonnegative integers $p_1, \dots, p_m, q_1, \dots, q_n$ such that
	\begin{equation*}
		(1, 1, \cdots, 1)(A_E^{p_1}+\cdots +A_E^{p_m})=(1, 1, \cdots, 1)(A_E^{q_1}+\cdots +A_E^{q_n})
	\end{equation*} where $(1, 1, \cdots, 1)$ denotes the all-ones row vector.
\end{thm}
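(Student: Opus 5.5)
The plan is to prove both directions through Lemma \ref{gr-language}, using Lemma \ref{write} to move all terms to a common level, and the Confluence Lemma \ref{Confluence} (applied in $M_{\overline E}$ via Proposition \ref{TEME}) to read off coefficients. Write $\mathbf{1}=(1,\dots,1)$ and $V(\gamma)=(v_1(\gamma),\dots,v_h(\gamma))^t$. Iterating Lemma \ref{write} $p$ times, and using that $E$ has no sinks so every vertex is regular, gives
\[
\mathbf{1}V(\alpha)=\mathbf{1}A_E^{p}V(\alpha+p)\quad\text{in } T_E, \qquad p\ge 0 .
\]

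\emph{Sufficiency.} Assume the matrix identity holds with $m\neq n$. Put $N=\max_{j,l}\{p_j,q_l\}$, and set $\alpha_j=N-p_j$ and $\beta_l=N-q_l$. By the displayed formula, $\sum_j\mathbf{1}V(\alpha_j)=\mathbf{1}(\sum_j A_E^{p_j})V(N)$ and $\sum_l\mathbf{1}V(\beta_l)=\mathbf{1}(\sum_l A_E^{q_l})V(N)$. The hypothesis says these two row vectors coincide, so the two sums are equal in $T_E$. Since $m\neq n$, Lemma \ref{gr-language} shows that $L_K(E)$ fails gr-IBN.

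\emph{Necessity.} Suppose $\sum_{j=1}^m\mathbf{1}V(\alpha_j)=\sum_{l=1}^n\mathbf{1}V(\beta_l)$ in $T_E$ with $m\neq n$. The $\mathbb{Z}$-action is by automorphisms, so I may shift everything and assume all $\alpha_j,\beta_l\le 0$. Now apply the Confluence Lemma in $M_{\overline E}$. The two sides, viewed as elements of the free monoid $F_{\overline E}$, both reduce to a common element $c$. The plan is to push $c$ further until it is concentrated at a single level $L\ge 0$, by applying the relation to every vertex at a level below $L$. Since the relation $\to$ is additive and transitive, both sides still reduce to this pushed-forward element.

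This is the main obstacle: showing that the fully pushed element is \emph{canonical}. The point is that in $F_{\overline E}$, applying the one-step relation to all vertices at levels below $L$, in any order, yields a uniquely determined element concentrated at level $L$. Indeed, the covering graph is acyclic and levels strictly increase along edges, so the process terminates. Moreover, the result is just the sum over paths of the appropriate length, which is order-independent. Hence the fully pushed form of $\sum_j\mathbf{1}V(\alpha_j)$ is exactly $\mathbf{1}(\sum_jA_E^{L-\alpha_j})V(L)$ taken in the free monoid.

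To finish, take $L\ge$ every level appearing in $c$ and push $c$ to level $L$. Since the left side reduces to $c$, its canonical pushed form equals the pushed form of $c$, and the same holds for the right side. Comparing coefficients of the free generators $v_i(L)$ then gives
\[
\mathbf{1}\sum_j A_E^{L-\alpha_j}=\mathbf{1}\sum_l A_E^{L-\beta_l}.
\]
Setting $p_j=L-\alpha_j\ge0$ and $q_l=L-\beta_l\ge0$ completes the proof. The care required is to check that reductions of $c$ commute with the push-forward. I would handle this by a diamond-type argument: any reduction of a vertex $(v,\gamma)$ with $\gamma<L$ is simply one step of its own push-forward.
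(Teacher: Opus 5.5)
Your proposal is correct and follows the same route as the paper. Sufficiency uses Lemma \ref{gr-language} together with the iterated Lemma \ref{write}; necessity uses the Confluence Lemma in $M_{\overline E}$, pushes the common reduct to a single level, and compares coefficients. Your explicit argument that the push-forward to level $L$ is canonical in the free monoid $F_{\overline E}$ is exactly what justifies the paper's final ``This implies'' step. That step cannot be read off from an equality in $T_E$ alone, since the $v_i(\delta)$ need not be independent there: for a loop at $u$ plus an edge $v\to u$, one has $u(\delta)=v(\delta)$ in $T_E$.
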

\begin{proof}
	Let $E$ be a finite graph without sinks, with vertex set
	$$
	E^0 = \{v_1,\dots,v_h\},
	$$
	and adjacency matrix $A_E$. 
		
	($\Longrightarrow$)
	Assume that $L_K(E)$ does not have gr-IBN.
	By Lemma \ref{gr-language}, there exist positive integers $m < n$ and integers
	$\alpha_1,\dots,\alpha_m,\beta_1,\dots,\beta_n \in \mathbb{Z}$ such that
	
	\begin{equation*}
		\sum_{j=1}^{m}\left(\sum_{i=1}^{h} v_i(\alpha_j)\right) = \sum_{l=1}^{n}\left(\sum_{k=1}^{h} v_k(\beta_l)\right) \text{ in } T_E.
	\end{equation*}	
%	By Lemma \ref{TEME}, we imply that 
%		\begin{equation*}
%		\left[\sum_{j=1}^{m}\left(\sum_{i=1}^{h} (v_i,\alpha_j)\right)\right] = \left[\sum_{l=1}^{n}\left(\sum_{k=1}^{h} (v_k,\beta_l)\right)\right]
%	\end{equation*}
%	in $M_{\overline{E}}$. 
	By Lemma \ref{Confluence}, there exists $\gamma$ in $T_E$ such that 
	\begin{equation*}
		\sum_{j=1}^{m}\left(\sum_{i=1}^{h} v_i(\alpha_j)\right)\rightarrow \gamma \text{ and }
\sum_{l=1}^{n}\left(\sum_{k=1}^{h} v_k(\beta_l)\right)\rightarrow \gamma
	\end{equation*}
	The defining
	relations of $T_E$ allow us to write $\gamma \in T_E$ as $$\gamma=\lambda_1v_1(\delta_1)+\cdots+\lambda_hv_h(\delta_h)$$
	where $\lambda_1, \dots, \lambda_h$ are nonnegative integers and $\delta_j\geq \max\{\alpha_1, \cdots , \alpha_m, \beta_1, \cdots , \beta_n\}$. Let $\delta=\max\{\delta_i, 1\leq i \leq h\}$. Since $E$ has no sinks, for each $v_i(\delta_i)$ with $\delta_i < \delta$, we may repeatedly apply relation~(\ref{TE}) to expand $v_i(\delta_i)$ into a linear combination of generators at level $\delta_i + 1$, then $\delta_i + 2$, and so on, until all generators are at level $\delta$. 
	Then, we obtain
	$$\gamma\rightarrow \gamma'=\lambda'_1v_1(\delta)+\cdots+\lambda'_hv_h(\delta)$$ where $\lambda_1', \dots, \lambda_h'$ are nonnegative integers. Equivalently, in $T_E$ we have
	\begin{equation}\label{ME}
		\sum_{j=1}^{m}\left(\sum_{i=1}^{h} v_i(\alpha_j)\right)=\lambda'_1v_1(\delta)+\cdots+\lambda'_hv_h(\delta)= \sum_{l=1}^{n}\left(\sum_{k=1}^{h} v_k(\beta_l)\right)
	\end{equation}

	Using the Lemma \ref{write}, we have that
	\begin{equation*}
	v_1(\alpha)+v_2(\alpha)+\cdots +v_h(\alpha)=(1, 1, \cdots, 1)_{1\times h}A_E\left(\begin{tabular}{c}
		$v_1(\alpha +1)$\\
		$v_2(\alpha +1)$\\
		$\cdots$\\
		$v_h(\alpha +1)$\\
	\end{tabular}\right)
	\end{equation*} for all $\alpha \in \mathbb{Z}$. Iterating this relation $k$ times, we obtain
		\begin{equation*}
		v_1(\alpha)+v_2(\alpha)+\cdots +v_h(\alpha)=(1, 1, \cdots, 1)_{1\times h}A^k_E\left(\begin{tabular}{c}
			$v_1(\alpha +k)$\\
			$v_2(\alpha +k)$\\
			$\cdots$\\
			$v_h(\alpha +k)$\\
		\end{tabular}\right).
	\end{equation*}

		Let $p_j=\delta-\alpha_j$, $q_l=\delta-\beta_l$ for all $1\leq j \leq m, 1\leq l \leq n$, from (\ref{ME}) we get that 
			$$\sum_{j=1}^{m}\left(\sum_{i=1}^{h} v_i(\alpha_j)\right)= (1, 1, \cdots, 1)_{1\times h}\left(\sum_{j=1}^{m} A_E^{p_j}\right)\left(\begin{tabular}{c}
				$v_1(\delta)$\\
				$v_2(\delta)$\\
				$\cdots$\\
				$v_h(\delta)$\\
			\end{tabular}\right) =\lambda'_1v_1(\delta)+\cdots+\lambda'_hv_h(\delta)$$
		
		and similarly:
				$$\sum_{l=1}^{n}\left(\sum_{k=1}^{h} v_k(\beta_l)\right)= (1, 1, \cdots, 1)_{1\times h}\left(\sum_{l=1}^{n} A_E^{q_l}\right)\left(\begin{tabular}{c}
			$v_1(\delta)$\\
			$v_2(\delta)$\\
			$\cdots$\\
			$v_h(\delta)$\\
		\end{tabular}\right) = \lambda'_1v_1(\delta)+\cdots+\lambda'_hv_h(\delta)$$
		
		This implies:
	\begin{equation*}
	(1, 1, \cdots, 1)_{1\times h}(A_E^{p_1}+\cdots +A_E^{p_m})=(1, 1, \cdots, 1)_{1\times h}(A_E^{q_1}+\cdots +A_E^{q_n}),
 	\end{equation*}  as desired.
 	
 		($\Longleftarrow$) Assume that there exist $p_1, \ldots, p_m, q_1, \ldots, q_n \in \mathbb{N}$ with $m \neq n$ such that
 	$$(1, 1, \cdots, 1)(A_E^{p_1} + \cdots + A_E^{p_m}) = (1, 1, \cdots, 1)(A_E^{q_1} + \cdots + A_E^{q_n}).$$
 	
 	Let $\delta = \max\{p_1, \ldots, p_m, q_1, \ldots, q_n\}$. Set $\alpha_j = \delta - p_j$ for $j = 1, \ldots, m$ and $\beta_l = \delta - q_l$ for $l = 1, \ldots, n$. 
 	We show that 
 	$$\sum_{j=1}^{m}\left(\sum_{i=1}^{h} v_i(\alpha_j)\right) = \sum_{l=1}^{n}\left(\sum_{k=1}^{h} v_k(\beta_l)\right) \text{ in } T_E.$$

 	Applying the Lemma \ref{write} repeatedly, for any $k \geq 0$:
	\begin{equation*}
	v_1(\alpha)+v_2(\alpha)+\cdots +v_h(\alpha)=(1, 1, \cdots, 1)_{1\times h}A^k_E\left(\begin{tabular}{c}
		$v_1(\alpha +k)$\\
		$v_2(\alpha +k)$\\
		$\cdots$\\
		$v_h(\alpha +k)$\\
	\end{tabular}\right)
\end{equation*} for all $\alpha \in \mathbb{Z}$.

 	Therefore, in $T_E$, for each $j = 1, \ldots, m$:
 	$$\sum_{i=1}^{h} v_i(\alpha_j) = \sum_{i=1}^{h} v_i(\delta - p_j) = (1, 1, \cdots, 1)_{1\times h} A_E^{p_j} \begin{pmatrix} v_1(\delta) \\
 		v_2(\delta)\\ \cdots \\ v_h(\delta) \end{pmatrix}.$$
 	
 	Summing over all $j$:
 	$$\sum_{j=1}^{m}\left(\sum_{i=1}^{h} v_i(\alpha_j)\right) = (1, 1, \cdots, 1)_{1\times h} \left(\sum_{j=1}^{m} A_E^{p_j}\right) \begin{pmatrix} v_1(\delta)\\v_2(\delta) \\ \cdots \\ v_h(\delta) \end{pmatrix}.$$
 	
 	Similarly, for the right-hand side:
 	$$\sum_{l=1}^{n}\left(\sum_{k=1}^{h} v_k(\beta_l)\right) = (1, 1, \cdots, 1)_{1\times h} \left(\sum_{l=1}^{n} A_E^{q_l}\right) \begin{pmatrix} v_1(\delta)\\v_2(\delta) \\ \cdots \\ v_h(\delta) \end{pmatrix}.$$
 	
 	By hypothesis, 
 	$$
 	(1, 1, \cdots, 1)_{1\times h}(A_E^{p_1} + \cdots + A_E^{p_m}) = (1, 1, \cdots, 1)_{1\times h}(A_E^{q_1} + \cdots + A_E^{q_n})
 	$$
 	thus
 	$$\sum_{j=1}^{m}\left(\sum_{i=1}^{h} v_i(\alpha_j)\right) = \sum_{l=1}^{n}\left(\sum_{k=1}^{h} v_k(\beta_l)\right) \text{ in } T_E.$$
 	Since $m \neq n$, by Lemma \ref{gr-language}, $L_K(E)$ has no gr-IBN. 	This completes the~proof. 	
\end{proof}

The following corollary characterizes a class of graphs satisfying the hypotheses of Theorem \ref{thm-nogribn}.
\begin{cor}\label{collum}
	Let $E$ be a finite graph without sinks having adjacency matrix $A_E=(a_{ij})_h$. If $$\sum\limits_{i=1}^ha_{i1}=\sum\limits_{i=1}^ha_{i2}=\cdots =\sum\limits_{i=1}^ha_{ih}\geq 2,$$ then $L_K(E)$ has no $($gr-$)$IBN.
\end{cor}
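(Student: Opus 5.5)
The plan is to produce an explicit witness for the condition in Theorem \ref{thm-nogribn}. Write $\mathbf{1}=(1,1,\cdots,1)$ for the all-ones row vector of size $h$, and let $d\geq 2$ be the common column sum. The hypothesis says exactly that
$$\mathbf{1}A_E=d\,\mathbf{1}.$$
By induction on $k$ this gives $\mathbf{1}A_E^k=d^k\mathbf{1}$ for every $k\geq 0$; the step is $\mathbf{1}A_E^{k+1}=(\mathbf{1}A_E^k)A_E=d^k\,\mathbf{1}A_E=d^{k+1}\mathbf{1}$.

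Next I choose the exponents. Take $m=1$ with $p_1=0$, and take $n=d$ with $q_1=\cdots=q_d=1$. Then
$$\mathbf{1}A_E^{0}=\mathbf{1}=\tfrac{1}{d}\cdot d\,\mathbf{1}$$
as a single term, while
$$\mathbf{1}(A_E^{1}+\cdots+A_E^{1})=d\cdot\mathbf{1}A_E=d^2\mathbf{1}.$$
These do not match, so I swap the roles: take $m=d$ copies of $p_j=0$ and $n=1$ with $q_1=1$. This gives
$$\mathbf{1}(A_E^0+\cdots+A_E^0)=d\,\mathbf{1}=\mathbf{1}A_E^{1},$$
and $m=d\neq 1=n$ because $d\geq 2$. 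Theorem \ref{thm-nogribn} then applies, since $E$ is finite without sinks, and shows that $L_K(E)$ has no gr-IBN.

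Finally, to get the parenthetical ``no IBN'' claim, I would use the contrapositive of Corollary \ref{UGNIBNgrIBN}: a ring with IBN has gr-IBN, so failure of gr-IBN forces failure of IBN.

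There is no real obstacle here. The only point to check is that the hypothesis concerns column sums, which is exactly what left multiplication by $\mathbf{1}$ computes. Concretely, the witness unwinds to the relation $d\,[L_K(E)]=[L_K(E)(-1)]$, read as $L_K(E)^d\cong_{gr}L_K(E)(-1)$ up to a normalization of the shift.
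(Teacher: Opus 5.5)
Your proof is correct and is essentially the paper's argument. The paper also notes that $(1,\dots,1)A_E^k=(c^k,\dots,c^k)$ and feeds the witness $p_1=k$, $q_1=\cdots=q_c=k-1$ into Theorem \ref{thm-nogribn}; for $k=1$ this is your choice with the two sides swapped, and the paper likewise deduces the failure of IBN from Corollary \ref{UGNIBNgrIBN}.
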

\begin{proof}
	Let $c=\sum\limits_{i=1}^ha_{i1}=\sum\limits_{i=1}^ha_{i2}=\cdots =\sum\limits_{i=1}^ha_{ih}$.
%	 We first claim that $c\geq 2$. Indeed, since $E$ has no sinks, if $c=1$ then $A_E$ is $$A_E=I_h=\left(\begin{tabular}{cccc}
%		1&0&...&0\\
%		0&1&...&0\\
%		...&...&...&...\\
%		0&0&...&1\\
%	\end{tabular}\right).$$
%	Then, $E$ is disjoint union of loops, a contradiction. 
	A direct computation shows that for any $k\geq 1$ : $(1, 1, \cdots, 1)_{1\times h}A^k=(c^k, c^k,  \cdots, c^k)_{1\times h}$, where $(x, x, \cdots, x)_{1\times h}$ denotes the row matrix with $h$ entries of $x$.
	Since $c \ge 2$, we have
	$$
	c^k = \underbrace{c^{k-1} + \cdots + c^{k-1}}_{c \text{ times}}.
	$$
	Therefore, let $p_1=k\geq 1, q_1, \dots q_r=k-1\in \mathbb{N}$, $r=c\geq 2$, we get that
	\begin{equation*}
		(1, 1, \cdots, 1)_{1\times h}A^{p_1}=(1, 1, \cdots, 1)_{1\times h}(A^{q_1}+\cdots +A^{q_r})
	\end{equation*}
	Hence, the condition of Theorem~5.9 is satisfied with $m \neq n$, and therefore
	$L_K(E)$ has no gr-IBN.
	It immediately follows from Corollary \ref{UGNIBNgrIBN} that  $L_K(E)$ has no IBN, finishing the proof.
%	On the other hand, we can also apply Theorem \ref{IBN} to show that $L_K(E)$ has no IBN since $\mathrm{rank}(A^t_E - J_E)=
%	\mathrm{rank}([A^t_E - J_E \ \ b])$. Indeed, since $$\sum\limits_{i=1}^ha_{i1}=\sum\limits_{i=1}^ha_{i2}=\cdots =\sum\limits_{i=1}^ha_{ih}\geq 2,$$ the column $b$ can be reduced to the sum of the columns of $(A^t_E - J_E)$.
\end{proof}

\begin{exas}
	\label{Ex:5.11}
	Consider the following graph
	\medskip
	$$\xymatrix{E: & u \ar@(ld,lu) \ar@(ul,ur) \ar@(dr,dl) \ar[r] \ar@/^.7pc/[r]& v. \ar@(ul,ur) \ar@(dr,dl) \ar@/^.7pc/[l]}$$
	\medskip
	
	Then, the adjacency matrix of $E$ is given by
	$$
	A_E=\begin{pmatrix}
		3 & 2\\
		1 & 2
	\end{pmatrix}.
	$$
	
	A direct computation shows that $A_E$ satisfies the hypothesis of Corollary \ref{collum}, therefore  $L_K(E)$ has no gr-IBN. \hfill $\Box$ 
%	In addition, (\ref{ex12}) leads to 
%	\begin{equation*}
%		\left((A_E^t)^2-4A_E^t\right)\begin{pmatrix}
%			1\\
%			1
%		\end{pmatrix}=0,
%	\end{equation*}
%	equivalently, 
%	\begin{equation}\label{ex12c}
%		\left((A_E^t)^2-4A_E^t+I_2\right)b=b,
%	\end{equation}
%	where $b= (1 \ 1)^t$. Performing polynomial division gives us $$x^2-4x+1=(x-1)(x-3)-2.$$ Hence, $(A_E^t)^2-4A_E^t+I_2=(A_E^t-I_2)(A_E^t-3I_2)-2I_2$. Substitute into equation (\ref{ex12c}), we get that
%	\begin{equation*}
%		\left((A_E^t-I_2)(A_E^t-3I_2)\right)b=3b.
%	\end{equation*}
%	This show that $$x_0=\frac{1}{3}(A_E^t-3I_2)b=\begin{pmatrix}
%	1/3\\
%	1/3
%	\end{pmatrix}$$ is a solution of  the linear system
%	$$
%	(A_E^{t} - I_2)x = b.
%	$$
%	By the Kronecker–Capelli Theorem, it follows that
%	$$
%	\operatorname{rank}(A_E^{t} - I_2)
%	=
%	\operatorname{rank}\big([A_E^{t} - I_2\ \ b]\big).
%	$$
%	By Theorem \ref{IBN}, it shows that $L_K(E)$ has no IBN.
\end{exas}

\section{Applications}
In Section 3, we showed that the gr-IBN property is strictly weaker than the IBN property.  However, in this section, we show that these properties are equivalent on the Leavitt path algebras of Cayley graphs and Hopf graphs. We further examine the behavior of gr-IBN under quotients by hereditary saturated subsets and Cartesian products.

\subsection{Cayley graphs}

For any finite group $G$, and any set of generators $S$ of $G$, the \emph{Cayley graph of $G$ with respect to $S$} is the directed graph $E_{G, S}$ having the vertex set $\{v_g\mid g\in G\}$ and in which there is an edge from $v_g$ to $v_h$ in case there exists $s\in S$ such that $h=gs$ in $G$. In the particular case 
the Cayley graph $E_{G, S}$ of the cyclic group $G = \mathbb{Z}_n\ (n\geq 3)$ with respect to the generating set $S =\{1, j\}\ (0\leq j\leq n-1)$, denoted by $C^j_n$, is the directed graph consisting of $n$ vertices $\{v_1, v_2, ..., v_n\}$ and $2n$ edges $\{e_1, e_2, ..., e_n, f_1, f_2, ..., f_n\}$ for which $s(e_i) = v_i$, $r(e_i) = v_{i+1}$,
$s(f_i) = v_i$, and $r(f_i) = v_{i+j}$. For clarity, we picture the two graphs $C^0_4$ and $C^2_4$ here.
\bigskip
\medskip

\begin{tabular}{cccccccc}
	$C_4^0=$& &$\xymatrix{\bullet^{v_1} \ar[r] \ar@(lu,ld)&\bullet^{v_2} \ar[d] \ar@(ru,rd)\\
		\bullet^{v_4} \ar[u] \ar@(lu,ld)& \bullet^{v_3} \ar[l] \ar@(ru,rd)}$&\ &$C_4^1=$&$\xymatrix{\bullet^{v_1} \ar@/^.5pc/[r] \ar[r]& \bullet^{v_2}\ar@/^.5pc/[d] \ar[d]\\
		\bullet^{v_3} \ar@/^.5pc/[u]\ar[u] & \bullet^{v_4} \ar@/^.5pc/[l] \ar[l]}$& $\ C_4^2=$&$\xymatrix{\bullet^{v_1} \ar[r] \ar@/^.5pc/[rd]& \bullet^{v_2} \ar[d] \ar@/^.5pc/[dl]\\
		\bullet^{v_4} \ar[u] \ar@/^.5pc/[ur]& \bullet^{v_3} \ar[l] \ar@/^.5pc/[lu]}$\\
\end{tabular}
\bigskip
\medskip

%In \cite{ap:tlpaogcg} the authors computed the size of the Grothendieck of the Leavitt path algebra $L_K(C_n^j)$. 

We next provide a criterion for when the Leavitt path algebra of the Cayley graph $E_{G, S}$, associated with a finite group $G$ and a generating set $S$, has the gr-IBN property. This result follows from \cite[Theorem 4.2]{np:tsolpaatibnp}.

\begin{prop}\label{Caley}
	Let $G$ be an arbitrary finite group, $S$ its generating set. The following conditions are equivalent:
	
	(1) $L_K(E_{G, S})$ has IBN;
	
	(2) $L_K(E_{G, S})$ has gr-IBN;
	
	(3) $S$ contains only one element;
	
	(4) $E_{G, S}$ is a graph consisting of a single cycle with $|G|$ vertices.
\end{prop}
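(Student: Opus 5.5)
The plan is to close the cycle of implications $(1)\Rightarrow(2)\Rightarrow(3)\Rightarrow(4)\Rightarrow(1)$. Here $(1)\Rightarrow(2)$ is Corollary \ref{UGNIBNgrIBN}. For $(3)\Leftrightarrow(4)$ and $(4)\Rightarrow(1)$ we use the equivalences already proved in \cite[Theorem 4.2]{np:tsolpaatibnp}, namely that $L_K(E_{G,S})$ has IBN if and only if $|S|=1$ if and only if $E_{G,S}$ is a single cycle. A direct argument is also available. If $S=\{s\}$ generates $G$, then $G$ is cyclic generated by $s$, so $v_g\to v_{gs}\to\cdots$ is a single cycle through all $|G|$ vertices. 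Conversely, a single cycle has out-degree $1$ everywhere, which forces $|S|=1$. For $(4)\Rightarrow(1)$ (or directly $(4)\Rightarrow(2)$), every vertex lies in exactly one cycle, so Corollary \ref{atmost} gives gr-IBN, and Corollary \ref{maximal} gives IBN, since the unique cycle is maximal with finitely many predecessors.

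The substantive step is $(2)\Rightarrow(3)$, which we prove in contrapositive form: if $|S|=k\ge 2$, then $L_K(E_{G,S})$ fails gr-IBN. The key observation is that $E_{G,S}$ has no sinks. Moreover, every vertex $v_h$ receives exactly $k$ edges, one from $v_{hs^{-1}}$ for each $s\in S$, because right multiplication by $s$ is a bijection of $G$. Hence every column sum of $A_{E_{G,S}}$ equals $k\ge 2$.

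Two points in this computation need care. The first is the convention when distinct generators give the same target, but since $s\neq s'$ implies $gs\neq gs'$, there are no coincidences, and we count one edge per $s\in S$. The second is the case $1\in S$, which merely produces loops and still contributes $1$ to each column sum.

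With the column sums in hand, Corollary \ref{collum} applies directly and shows that $L_K(E_{G,S})$ has neither gr-IBN nor IBN. Equivalently, in the language of Theorem \ref{thm-nogribn}, we have $(1,\dots,1)A=(k,\dots,k)=k\,(1,\dots,1)$, which is an identity with $m=1\neq n=k$.

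The only real obstacle is checking the in-degree count carefully so that Corollary \ref{collum} is applicable. Everything else is bookkeeping among results already available.
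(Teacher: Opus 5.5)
Your proposal is correct and follows essentially the same route as the paper. Both close the cycle $(1)\Rightarrow(2)\Rightarrow(3)\Rightarrow(4)\Rightarrow(1)$ using Corollary \ref{UGNIBNgrIBN}, then the in-degree count giving all column sums equal to $|S|\ge 2$ so that Corollary \ref{collum} applies, then the observation that a single generator makes $G$ cyclic and the graph a single cycle, and finally Corollary \ref{maximal}.
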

\begin{proof}
	(1) $\Longrightarrow$ (2). It follows from Corollary \ref{UGNIBNgrIBN}.
	
	(2) $\Longrightarrow$ (3). Assume that $L_K(E_{G, S})$ has gr-IBN, and $|S|\geq 2$. We immediately have that
	$|s^{-1}(g)|=|r^{-1}(g)|=|S|\geq 2$ for all $g\in G$. So, if $A=(a_{ij})_h$ is the adjacency matrix of $E_{G, S}$, then  $$\sum\limits_{i=1}^ha_{i1}=\sum\limits_{i=1}^ha_{i2}=\cdots =\sum\limits_{i=1}^ha_{ih}=|S|\geq 2.$$
	
	By Corollary \ref{collum}, $L_K(E_{G, S})$ has no gr-IBN, a contradiction. Thus, (2) $\Longrightarrow$ (3).

	(3) $\Longrightarrow$ (4). Assume that $S$ contains only one element. If $S = \{g\}\ (g\in G)$, then $G$ is a cyclic group generated by $g$, and hence, every vertex $v_{g^i}\ (0\leq i\leq |G|)$ emits only one edge to the vertex $v_{g^{i+1}}$, that means,
	the graph $E_{G, S}$ is of the form $$\xymatrix{\bullet^{v_g} \ar[r] & \bullet^{v_{g^2}} \ar[d] \\
		\bullet^{v_{g^n}} \ar[u] & \bullet^{v_{g^3}}\ar@{--}[l] }$$
	if $n = |G|\geq 2$ and, $E_{G, S}$ is a loop 
	in the case when $n = |G|=1$. 
	
	(4) $\Longrightarrow$ (1). It follows from Corollary \ref{maximal}, thus completing the~proof.
\end{proof}

\subsection{Hopf graphs}

In this subsection, we investigate a class of graphs related to Cayley graphs. Specifically, we study the gr-IBN property for Leavitt path algebras of finite Hopf graphs. We next recall the notion of Hopf graphs introduced by Cibils and Rosso in \cite{Hopf}. Let $G$ be an arbitrary group and $\mathcal{C}$ the set of all conjugacy classes of $G$. We call a \emph{ramification data} of $G$ is a function $\mathfrak{r}: \mathcal{C}\longrightarrow \mathbb{N}$, denoted by $\mathfrak{r}= \sum\limits_{C\in \mathcal{C}} \mathfrak{r}_CC$ (where $\mathfrak{r}_C := \mathfrak{r}(C)$).  

%The {\it support} of $\mathfrak{r}$ is the set

%$$\rm{supp}(\mathfrak{r})=\{C\in \mathcal{C}\mid \mathfrak{r}_C>0\}.$$ We say that $\mathfrak{r}$ has \emph{finite support} if $\bigcup_{C\in \rm{supp}(\mathfrak{r})}C$ is a finite set. We denote by $S_{G, \mathfrak{r}}$ the \emph{subsemigroup} of $G$ generated by $\bigcup_{C\in \rm{supp}(\mathfrak{r})}C$. 
%In the group $G$, $\tau(G)=\{g\in G| g^n=1_G \text{ for some } n\in \mathbb{N}\}$ be called \emph{the torsion of} $G$.

\begin{defn}[{\cite[Definition 3.1]{Hopf}}]\label{Hopf}
	Let $G$ be an arbitrary group with a ramification data $\mathfrak{r}= \sum\limits_{C\in \mathcal{C}} \mathfrak{r}_CC$. The {\it Hopf graph associated to the pair} $(G,\mathfrak{r})$, denoted by $\Gamma_{G, \mathfrak{r}}$, has set of vertices $\Gamma_{G, \mathfrak{r}}^0=G$  and has $\mathfrak{r}_C$ edges from $x$ to $xc$	for each $x \in  G$ and $c\in C$.
\end{defn}

%It is worth mentioning that in \cite[Theorem 3.3]{Hopf}  Cibils and Rosso showed that Hopf graphs are precisely the graphs such that the path algebra can be endowed with a graded Hopf algebra structure.\medskip

For clarification, we illustrate the notion of Hopf graphs by presenting the following examples.

%VIET MA TRAN KE CUA CAC DO THI NAY RA VA NHAN XET TONG CAC COT BANG NHAU

\begin{exas}\label{2.3}
	Let $G = S_3$ be the symmetric group of order $6$, and write $$G=\{id, (12), (13), (23), (123), (132)\}.$$ We then have $\mathcal{C}=\{[id], [(12)], [(123)]\}$, where $$[id]=\{id\}, [(12)]=\{(12), (13), (23)\}, [(123)]=\{(123), (132)\}.$$
	
	(1) Consider the ramification data $\mathfrak{r}=[(123)]$. We then have that $\Gamma_{G, \mathfrak{r}}$ is the following graph:
	\begin{center}
		$\xymatrix{& \bullet_{id} \ar@/^1pc/[rd] \ar@/_1pc/[ld]& \\
			\bullet_{(123)} \ar@/_1pc/[ru] \ar@/_1pc/[rr]& & \bullet_{(132)} \ar@/^1pc/[lu] \ar@/_1pc/[ll]}$   
		$\xymatrix{& \bullet_{(12)} \ar@/^1pc/[rd] \ar@/_1pc/[ld]& \\
			\bullet_{(13)} \ar@/_1pc/[ru] \ar@/_1pc/[rr]& & \bullet_{(23)} \ar@/^1pc/[lu] \ar@/_1pc/[ll]}$ 
	\end{center}
	\medskip
%	The adjacency matrix of $\Gamma_{G, \mathfrak{r}}$ is given by
%	$$
%	A_{\Gamma_{G, \mathfrak{r}}}=\begin{pmatrix}
%		0 & 1 & 1 & 0 & 0 & 0 \\
%		1 & 0 & 1 & 0 & 0 & 0 \\
%		1 & 1 & 0 & 0 & 0 & 0 \\
%		0 & 0 & 0 & 0 & 1 & 1 \\
%		0 & 0 & 0 & 1 & 0 & 1 \\
%		0 & 0 & 0 & 1 & 1 & 0 \\
%	\end{pmatrix}.
%	$$
	
	(2) Consider the ramification data $\mathfrak{r}=[(12)]$. Then, $\Gamma_{G, \mathfrak{r}}$ is the following graph:
	$$\xymatrix{\bullet_{(123)} \ar@/_1pc/[d] \ar@/_1pc/[rd] \ar@/_1pc/[rrd]& & \bullet_{(132)} \ar@/_1pc/[d] \ar@/_1pc/[ld] \ar@/_1pc/[lld]\\
		\bullet_{(12)} \ar@/_1pc/[u] \ar@/_1pc/[urr] \ar@/_1pc/[rd] & \bullet_{(13)} \ar@/_1pc/[d] \ar@/_1pc/[ul] \ar@/_1pc/[ru]& \bullet_{(23)} \ar@/_1pc/[u] \ar@/_1pc/[ull] \ar@/_1pc/[ld]\\
		& \bullet_{id} \ar@/_1pc/[u] \ar@/_1pc/[ul] \ar@/_1pc/[ru]& }$$
%		The adjacency matrix of $\Gamma_{G, \mathfrak{r}}$ is given by
%	$$
%	A_{\Gamma_{G, \mathfrak{r}}}=\begin{pmatrix}
%		0 & 1 & 0 & 1 & 0 & 1 \\
%		1 & 0 & 1 & 0 & 1 & 0 \\
%		0 & 1 & 0 & 1 & 0 & 1 \\
%		1 & 0 & 1 & 0 & 1 & 0 \\
%		0 & 1 & 0 & 1 & 0 & 1 \\
%		1 & 0 & 1 & 0 & 1 & 0 \\
%	\end{pmatrix}.
%	$$
\end{exas}

We note here that for a Hopf graph, the adjacency matrix has equal row sums and equal column sums.

	\begin{lem}\label{lem4.4}
		Let $G$ be a group with $h$ elements and $\mathfrak{r}=\sum_{C\in \mathcal{C}}\mathfrak{r}_CC$ a ramification data. If $A=(a_{ij})_h$ is the adjacency matrix of $\Gamma_{G, \mathfrak{r}}$, then  $$\sum\limits_{i=1}^ha_{i1}=\sum\limits_{i=1}^ha_{i2}=\cdots =\sum\limits_{i=1}^ha_{ih}=\sum_{C\in \mathcal{C}}\mathfrak{r}_C|C|, \text{ and }$$
		$$\sum\limits_{i=1}^ha_{1i}=\sum\limits_{i=1}^ha_{2i}=\cdots =\sum\limits_{i=1}^ha_{hi}=\sum_{C\in \mathcal{C}}\mathfrak{r}_C|C|.$$
	\end{lem}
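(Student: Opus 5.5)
We will compute row and column sums of $A$ directly from Definition \ref{Hopf}. Index the vertices by the group elements, so that $a_{x,y}$ is the number of edges from $x$ to $y$. By definition, for each $x\in G$, each class $C\in\mathcal{C}$ and each $c\in C$, there are $\mathfrak{r}_C$ edges from $x$ to $xc$. Hence
\[
a_{x,y}=\sum_{C\in\mathcal{C}}\mathfrak{r}_C\,\bigl|\{c\in C \mid xc=y\}\bigr| = \sum_{C\ni x^{-1}y}\mathfrak{r}_C = \mathfrak{r}_{[x^{-1}y]},
\]
because $c$ is forced to equal $x^{-1}y$, and that element lies in exactly one conjugacy class, namely $[x^{-1}y]$. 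Since $G$ is finite, every $C$ is finite and all sums below are finite.

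For the row sums we fix $x$ and sum over $y$. The map $y\mapsto x^{-1}y$ is a bijection of $G$, so
\[
\sum_y a_{x,y}=\sum_{g\in G}\mathfrak{r}_{[g]}=\sum_{C\in\mathcal{C}}\mathfrak{r}_C|C|,
\]
where the last equality groups the elements $g$ by their conjugacy class. Equivalently, each $x$ emits $\mathfrak{r}_C$ edges for each of the $|C|$ elements of $C$.

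For the column sums we fix $y$ and sum over $x$. The map $x\mapsto x^{-1}y$ is also a bijection $G\to G$, since $x=y g^{-1}$ is recovered from $g=x^{-1}y$. Thus
\[
\sum_x a_{x,y}=\sum_{g\in G}\mathfrak{r}_{[g]}=\sum_{C\in\mathcal{C}}\mathfrak{r}_C|C|
\]
as well.

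The only point needing care is the identity $a_{x,y}=\mathfrak{r}_{[x^{-1}y]}$. One must check that no edge is double counted across different classes, which holds because conjugacy classes partition $G$, and that the definition produces exactly $\mathfrak{r}_C$ edges for each pair $(x,c)$. Once this identity is in place, both sums follow from the bijection argument, and no step looks like a real obstacle.
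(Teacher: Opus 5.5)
Your proof is correct and is essentially the paper's argument: the paper counts the out-degree of $g$ as $\sum_{C\in\mathcal{C}}\mathfrak{r}_C|C|$ directly from the definition, and the in-degree by noting that each $c\in C$ has a unique $x=gc^{-1}$, which is exactly your bijection $x\mapsto x^{-1}y$. Your explicit entry formula $a_{x,y}=\mathfrak{r}_{[x^{-1}y]}$ only packages the same counting a little more cleanly.
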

	\begin{proof}
		By Definition \ref{Hopf}, for each $g \in G$, the edges leaving $g$ are in bijection with 
		$$\bigsqcup_{C\in \mathcal{C}} \{\mathfrak{r}_C \text{ edges to } gc \ |\ c \in C\},$$ giving
		$$
		|s^{-1}(g)|=\sum_{C\in \mathcal{C}}\mathfrak{r}_C|C|.
		$$
		Similarly, there are $\mathfrak{r}_C$ edges enters $g$ for each pair $(x,c)$ with $x\in G$, $c\in C$, $xc=g$;
		for each such $c$ there is a unique $x=g c^{-1}$, giving
		$$
		|r^{-1}(g)|=\sum_{C\in \mathcal{C}}\mathfrak{r}_C|C|.
		$$
		Hence all row sums and column sums of $A$ equal $\sum_{C\in \mathcal{C}}\mathfrak{r}_C|C|$.
	\end{proof}
	
	Combining Lemma \ref{lem4.4} and Corollary \ref{collum}, we obtain the following result, which shows that IBN and gr-IBN are equivalent for Leavitt path algebras of Hopf~graphs.
	\begin{prop}\label{HopfThm}

		Let $G$ be a finite group with  a ramification data $\mathfrak{r}=\sum_{C\in \mathcal{C}}\mathfrak{r}_CC$. Then, the following statements are equivalent:
		
		$(1)$ $L_K(\Gamma_{G, \mathfrak{r}})$ has IBN;
		
		$(2)$ $L_K(\Gamma_{G, \mathfrak{r}})$ has gr-IBN;
		
		$(3)$ $\sum_{C\in \mathcal{C}}\mathfrak{r}_C|C|\leq 1.$
	\end{prop}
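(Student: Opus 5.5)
Write $d=\sum_{C\in\mathcal{C}}\mathfrak{r}_C|C|$. By Lemma \ref{lem4.4}, every vertex of $\Gamma_{G,\mathfrak{r}}$ emits exactly $d$ edges and receives exactly $d$ edges. I will prove the cycle of implications (1) $\Rightarrow$ (2) $\Rightarrow$ (3) $\Rightarrow$ (1). The first implication is Corollary \ref{UGNIBNgrIBN}.

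For (2) $\Rightarrow$ (3), argue by contraposition. Suppose $d\geq 2$. Then $\Gamma_{G,\mathfrak{r}}$ is a finite graph without sinks, since every vertex emits $d\geq 1$ edges. By Lemma \ref{lem4.4}, all column sums of its adjacency matrix equal $d\geq 2$. So Corollary \ref{collum} applies, and $L_K(\Gamma_{G,\mathfrak{r}})$ has no gr-IBN.

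For (3) $\Rightarrow$ (1), split into the two cases $d=0$ and $d=1$.

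\emph{Case $d=0$.} The graph has no edges, so it is a disjoint union of $|G|$ isolated vertices. Then $L_K(\Gamma_{G,\mathfrak{r}})\cong K^{|G|}$. Any vertex is a sink with no cycle pointing to it, hence a maximal sink, and it has only itself as a predecessor. Corollary \ref{maximal} (or directly Theorem \ref{IBN}) gives IBN.

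\emph{Case $d=1$.} Since $d$ is a sum of nonnegative terms $\mathfrak{r}_C|C|$, $d=1$ forces $\mathfrak{r}=C$ for a single conjugacy class $C=\{c\}$, i.e.\ $c$ is central, with $\mathfrak{r}_C=1$. The graph then has exactly one edge $x\to xc$ for each $x\in G$. So it is the disjoint union of the cosets $x\langle c\rangle$, each forming a single cycle of length $\operatorname{ord}(c)$ (a loop if $c$ is the identity). Each vertex lies on exactly one cycle, so Corollary \ref{atmost} already gives gr-IBN. However, (1) asks for IBN, so instead I apply Corollary \ref{maximal} to any one of these cycles. A cycle in a disjoint union of cycles is maximal, and it has finitely many predecessors, namely its own vertices. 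Corollary \ref{maximal} is in fact derived from the IBN result \cite[Corollary 3.9]{np:tsolpaatibnp}, so this gives IBN.

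The only delicate point is the case analysis in (3) $\Rightarrow$ (1), specifically checking that $d\le1$ forces the graph to be a disjoint union of isolated vertices or of cycles. Everything else is a direct citation.
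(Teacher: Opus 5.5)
Your proof is correct. For (1)$\Rightarrow$(2) and (2)$\Rightarrow$(3) it coincides with the paper's: Corollary~\ref{UGNIBNgrIBN}, then Lemma~\ref{lem4.4} together with Corollary~\ref{collum}. You also make explicit the no-sinks hypothesis of Corollary~\ref{collum}, which the paper leaves implicit.

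The genuine difference is in (3)$\Rightarrow$(1). The paper settles it in one line by citing \cite[Corollary 3.10]{np:olpahg}, an external result on Leavitt path algebras of Hopf graphs. You instead prove it directly:
\begin{itemize}
\item $d\le 1$ forces either no edges at all, or $\mathfrak{r}=C$ for a single central class $C=\{c\}$ with multiplicity one;
\item hence $\Gamma_{G,\mathfrak{r}}$ is a disjoint union of isolated vertices, or of cycles supported on the cosets $x\langle c\rangle$;
\item you then invoke the IBN result that underlies Corollary~\ref{maximal}.
\end{itemize}
The paper's route is shorter but outsources the structural input. Yours is self-contained, using only tools available in this paper, and shows concretely why $d=1$ is the threshold.

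Your caveat about Corollary~\ref{maximal} is exactly right: as stated it only yields gr-IBN, so for (1) you must appeal to \cite[Corollary 3.9]{np:tsolpaatibnp}. This applies equally to your $d=0$ case, where your wording ``Corollary~\ref{maximal} gives IBN'' is slightly loose. There Theorem~\ref{IBN}, or simply the commutativity of $K^{|G|}$, settles it anyway.
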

\begin{proof}
	Let $|G|=h$ and $A=(a_{ij})_h$ be the adjacency matrix of $\Gamma_{G, \mathfrak{r}}$. Lemma \ref{lem4.4} shows that
	 $$\sum\limits_{i=1}^ha_{i1}=\sum\limits_{i=1}^ha_{i2}=\cdots =\sum\limits_{i=1}^ha_{ih}=\sum_{C\in \mathcal{C}}\mathfrak{r}_C|C|.$$
	Thus, by Corollary \ref{collum}, if $\sum_{C\in \mathcal{C}}\mathfrak{r}_C|C|\geq 2$ then $L(\Gamma_{G, \mathfrak{r}})$ has no gr-IBN. This implies that $(2) \Longrightarrow (3)$.
	
	$(3) \Longrightarrow (1)$. It follows from \cite[Corollary 3.10]{np:olpahg}.
%	If $k=0$, then  $\Gamma_{G, \mathfrak{r}}$ is a disjoint union of isolated vertices by  \cite[Corollary 2.11]{np:olpahg}. If $k=1$, by  \cite[Proposition 2.5]{np:olpahg}, $\Gamma_{G, \mathfrak{r}}$ has a cycle and by   \cite[Corollary 2.11]{np:olpahg}, $\Gamma_{G, \mathfrak{r}}$ is a disjoint union of single cycles. So, if $k\leq 1$, then $\Gamma_{G, \mathfrak{r}}$ has a maximal sink or cycle with finitely many predecessors. So, By Corollary \ref{maximal}, $L(\Gamma_{G, \mathfrak{r}})$ has IBN. 
	
	By Corollary \ref{UGNIBNgrIBN}, we obtain that (1) $\Longrightarrow$ (2). Thus finishing the proof.
\end{proof}

\subsection{Quotient graphs}

In \cite{Li2024}, Li, Li, and Wen showed that the study of the IBN property for the Leavitt path algebra of the graph $E$ can be reduced to the quotient graph $E/H$ where $H$ is a hereditary saturated subset of $E^0$. In this subsection, we demonstrate that an analogous result holds for the gr-IBN property.  
%Subsequently, we apply our results to the Cartesian product of graphs.

\begin{defn}[{\cite[Definition 2.4.11]{aam:lpa}}]\label{quotientgraph}
	Let $E$ be a finite graph, and let $H$ be a hereditary saturated subset of $E^0$.
	We denote by $E/H$ the following graph, which is called the {\it quotient graph} of $E$ by~$H$:
	$$
	(E/H)^0 = E^0 \setminus H, 
	\qquad
	(E/H)^1 = \{\, e \in E^1 : r(e) \notin H \,\}.
	$$
	The range and source functions for $E/H$ are defined by restricting the range and source functions of $E$ to $(E/H)^1$.
\end{defn}

We establish the graded analogue of \cite[Corollary 3.6]{Li2024}, which implies that the gr-IBN property
of $L(E/H)$ of the quotient graph $E/H$ implies the gr-IBN property of $L_K(E)$.

\begin{prop}\label{cor4.7}
	Let $E$ be a finite graph and $H$ a hereditary saturated subset of $E^0$. If $L_K(E/H)$ has gr-IBN, then $L_K(E)$ has gr-IBN.
\end{prop}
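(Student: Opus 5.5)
The plan is to use Lemma \ref{r--s}: we produce a graded ring homomorphism $L_K(E)\to L_K(E/H)$ and then read off gr-IBN for $L_K(E)$ directly from the hypothesis on $L_K(E/H)$. The natural candidate is the quotient map. Let $I(H)$ be the ideal of $L_K(E)$ generated by $H$. Since $H$ consists of vertices, which are homogeneous of degree $0$, $I(H)$ is a graded ideal. Hence $L_K(E)/I(H)$ is a $\mathbb{Z}$-graded ring, and the canonical projection $\pi\colon L_K(E)\to L_K(E)/I(H)$ is a graded ring homomorphism.

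The second step is to identify $L_K(E)/I(H)$ with $L_K(E/H)$ as graded algebras. Here we use the standard result \cite[Corollary 2.4.13]{aam:lpa} for row-finite graphs: when $H$ is hereditary and saturated, there is an isomorphism $L_K(E)/I(H)\cong L_K(E/H)$. This isomorphism sends $v+I(H)\mapsto v$ for $v\notin H$, sends $e+I(H)\mapsto e$ and $e^*+I(H)\mapsto e^*$ for $r(e)\notin H$, and kills the remaining generators.

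We then check that this isomorphism preserves degrees. It sends each generator $v$, $e$, $e^*$ to an element of the same degree ($0$, $1$, $-1$ respectively), or to $0$, so it is graded. If one prefers to avoid quoting the graded version, define $\phi\colon L_K(E)\to L_K(E/H)$ directly on generators: $v\mapsto v$ or $0$ according as $v\notin H$ or $v\in H$, and $e\mapsto e$, $e^*\mapsto e^*$ if $r(e)\notin H$, and $0$ otherwise.

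For $\phi$ we need to verify the Leavitt relations (1)--(4), and the only nontrivial one is (4). Take a regular $v\notin H$. The edges of $s^{-1}(v)$ with range in $H$ map to $0$. The remaining edges are exactly $s^{-1}_{E/H}(v)$, and this set is nonempty because $H$ is saturated. Hence $v$ is regular in $E/H$ and relation (4) holds there. For $v\in H$, every edge $e\in s^{-1}(v)$ has $r(e)\in H$ because $H$ is hereditary, so both sides map to $0$. Since $\phi$ maps homogeneous generators to homogeneous elements of the same degree, it is a graded homomorphism. Surjectivity or injectivity is not needed for Lemma \ref{r--s}; a graded homomorphism suffices.

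Finally, apply Lemma \ref{r--s} with $R=L_K(E)$ and $S=L_K(E/H)$, using the hypothesis that $S$ has gr-IBN. We should also confirm that both rings are unital and that $\phi(1)=1$, which holds because $1=\sum_{v\in E^0}v$ maps to $\sum_{v\notin H}v=1_{L_K(E/H)}$. This matters because the base change $-\otimes_R S$ has to send $R(\alpha)$ to $S(\alpha)$.

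The main point to be careful about is the saturation argument in relation (4). In particular, we must handle a vertex $v\notin H$ that is regular in $E$: we need it to remain regular rather than become a sink in $E/H$, and this is exactly what saturation guarantees.
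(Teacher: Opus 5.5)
Your proposal is correct and follows essentially the same route as the paper: the paper also uses the graded homomorphism $L_K(E)\to L_K(E/H)$ defined on generators, citing \cite[Theorem 2.4.12]{aam:lpa} instead of checking the relations, and then concludes via Lemma \ref{r--s}. Your direct check of relation (4) via saturation, and your remark that the map is unital (so that $R(\alpha)\otimes_R S\cong_{gr} S(\alpha)$), spell out details the paper leaves to the citation and to the proof of Lemma \ref{r--s}.
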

\begin{proof}
	According to \cite[Theorem 2.4.12]{aam:lpa}, there is a $\mathbb{Z}$-graded homomorphism:
	$$
	\Psi: L_K(E) \longrightarrow L_K(E/H)
	$$
	by setting:
	
		$
		\Psi(v) =
		\begin{cases}
			v, & \text{if } v \notin H \\
			0, & \text{otherwise}
		\end{cases}
		$\ \ \ 
		$
		\Psi(e) =
		\begin{cases}
			e, & \text{if } r(e) \notin H \\
			0, & \text{otherwise}
		\end{cases}
		$\ \ \ \ 
		$
		\Psi(e^*) =
		\begin{cases}
			e^*, & \text{if } s(e^*) \notin H \\
			0, & \text{otherwise}.
		\end{cases}
		$
	Suppose $L_K(E/H)$ has gr-IBN. From Lemma \ref{r--s}, it immediately deduce that $L_K(E)$ has gr-IBN. 
\end{proof}

	The following example shows that, in general, the gr-IBN property of $L_K(E/H)$ cannot be derived from that of  $L_K(E)$.
	\begin{exas}
	Let $E$ be the graph
	\medskip
	$$\xymatrix{u \ar@(ld,lu) \ar@(ul,ur) \ar@(dr,dl) \ar[r] \ar@/^.7pc/[r]& v \ar@(ul,ur) \ar@(dr,dl) \ar@/^.7pc/[l] \ar[r]& w.}$$
	\medskip
	
	Consider the hereditary saturated subset $H=\{w\}$ of $E^0$. Then, $E/H$ is the following graph:
	\medskip
	$$\xymatrix{u \ar@(ld,lu) \ar@(ul,ur) \ar@(dr,dl) \ar[r] \ar@/^.7pc/[r]& v. \ar@(ul,ur) \ar@(dr,dl) \ar@/^.7pc/[l]}$$
	\medskip
	
	According to Proposition \ref{hassink}, $L_K(E)$ has gr-IBN since $E$ has a sink $w$. On the other hand, by Example \ref{Ex:5.11}, $L_K(E/H)$ has no gr-IBN. \hfill $\Box$
\end{exas}

\subsection{Cartesian product of graphs}
In the remainder of this section, we apply the above results to the Leavitt path algebras of Cartesian product of graphs.
We next recall the definition of the Cartesian product of graphs  introduced by Feigenbaum \cite{Fei1986}.
\begin{defn}[{\cite[page 105]{Fei1986}}]\label{Cartesian}
Let $E$ and $F$ be two finite graphs. The Cartesian product of $E$ and $F$ is the following graph $G$:
$$
G^0 = \{(u,v) : u \in E^0,\ v \in F^0\},
$$
$$
G^1 = \{(e,v) : e \in E^1,\ v \in F^0\}
\;\cup\;
\{(u,f) : u \in E^0,\ f \in F^1\},
$$
with
$$
s_G(e,v) = (s_E(e),v), 
\qquad
s_G(u,f) = (u,s_F(f)),
$$
$$
r_G(e,v) = (r_E(e),v), 
\qquad
r_G(u,f) = (u,r_F(f)).
$$

Further, by $E \times F$ we denote the Cartesian product of $E$ and $F$.	
\end{defn}
\begin{exas}
	Let $E$ and $F$ be as follows:
	$$
	\xymatrix{E=& u_1\ar@/^.5pc/[r]^{e_1} & u_2 \ar@/^.5pc/[l]^{e_2}& & F= & v_1 \ar[r]^{f_1}& v_2 \ar[r]^{f_2}& v_3 }
	$$
	
	Then, the graph $E \times F$ is as follows:
	$$
	\xymatrix{(u_1,v_1) \ar[rr]^{(u_1,f_1)} \ar@/^.5pc/[d]^{(e_1,v_1)}&& (u_1,v_2)\ar[rr]^{(u_1,f_2)} \ar@/^.5pc/[d]^{(e_1,v_2)}&& (u_1,v_3) \ar@/^.5pc/[d]^{(e_1,v_3)}\\
	(u_2,v_1) \ar[rr]^{(u_2,f_1)} \ar@/^.5pc/[u]^{(e_2,v_1)}&& (u_2,v_2)\ar[rr]^{(u_2,f_2)} \ar@/^.5pc/[u]^{(e_2,v_2)}&& (u_2,v_3). \ar@/^.5pc/[u]^{(e_2,v_3)}}
	$$
\end{exas}
\begin{lem}\label{twosink}
	Let $E$ and $F$ be the finite graphs. Then, 
	$E \times F$ has a sink if and only if $E$ and $F$ have a sink.
\end{lem}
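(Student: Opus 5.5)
The plan is to compute $s_G^{-1}$ directly from Definition \ref{Cartesian}. The edges of $G=E\times F$ come in two kinds, $(e,v)$ and $(u,f)$. For a vertex $(u,v)\in G^0$, an edge $(e,v')$ has source $(u,v)$ exactly when $s_E(e)=u$ and $v'=v$. An edge $(u',f)$ has source $(u,v)$ exactly when $u'=u$ and $s_F(f)=v$. Hence
$$
s_G^{-1}\big((u,v)\big)=\{(e,v): e\in s_E^{-1}(u)\}\ \sqcup\ \{(u,f): f\in s_F^{-1}(v)\},
$$
and so $|s_G^{-1}((u,v))|=|s_E^{-1}(u)|+|s_F^{-1}(v)|$. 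The union is disjoint, because $E^1$ and $F^1$ are disjoint from the vertex sets (the two kinds of edge are formally distinct). This counting identity is the only real step, and it is immediate from the definition.

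From this formula, $(u,v)$ is a sink in $G$ if and only if $|s_E^{-1}(u)|+|s_F^{-1}(v)|=0$. Since both terms are nonnegative integers, this happens exactly when $u$ is a sink in $E$ and $v$ is a sink in $F$.

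For ($\Longrightarrow$): if $(u,v)$ is a sink of $E\times F$, then $u$ is a sink of $E$ and $v$ is a sink of $F$.

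For ($\Longleftarrow$): given sinks $u\in E^0$ and $v\in F^0$, the vertex $(u,v)$ is a sink of $E\times F$.

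I expect no genuine obstacle. The only care needed is to interpret ``$E$ and $F$ have a sink'' as each graph having at least one sink. One should also note that the sink of $E\times F$ obtained in ($\Longleftarrow$) is the pair of the given sinks; the set of sinks of $E\times F$ is exactly the product of the sets of sinks of $E$ and $F$.
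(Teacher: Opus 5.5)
Your proof is correct and is essentially the paper's argument: both write $s^{-1}_{E\times F}((u,v))$ as the union of $\{(e,v): e\in s_E^{-1}(u)\}$ and $\{(u,f): f\in s_F^{-1}(v)\}$, and conclude that it is empty exactly when $u$ is a sink of $E$ and $v$ is a sink of $F$. Your extra disjointness and counting remark is harmless but not needed, since a union is empty if and only if both pieces are empty.
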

\begin{proof}
	By Definition \ref{Cartesian}, for any $(u,v)\in (E\times F)^0$, 
	$$
	s^{-1}_{E\times F}((u,v))= \{(e,v) : e \in E^1,\ s_E(e)=u\}
	\;\cup\;
	\{(u,f) : f \in F^1,\ s_F(f)=v\}.
	$$
	Hence, $s^{-1}_{E\times F}((u,v))=\emptyset$ if and only if $s^{-1}_E(u)=\emptyset$ and $s^{-1}_F(v)=\emptyset$. Thus finishing the proof.
\end{proof}
From Lemma \ref{twosink}, we can construct some graphs whose Leavitt path algebras have gr-IBN.
\begin{prop}
	 Let $E$ and $F$ be the finite graphs. If both $E$ and $F$ have a sink, then $L_K(E \times F)$ has gr-IBN.
\end{prop}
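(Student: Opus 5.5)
The proof will be short: combine Lemma \ref{twosink} with Proposition \ref{hassink}. First I note that $E\times F$ is a finite graph whenever $E$ and $F$ are finite. Its vertex set is $E^0\times F^0$ and its edge set is $(E^1\times F^0)\cup(E^0\times F^1)$, both of which are finite. This is needed because Proposition \ref{hassink} is stated only for finite graphs.

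Next, suppose $u\in E^0$ is a sink of $E$ and $v\in F^0$ is a sink of $F$. By the computation in the proof of Lemma \ref{twosink}, the edges leaving $(u,v)$ in $E\times F$ are
$$s^{-1}_{E\times F}((u,v))=\{(e,v): s_E(e)=u\}\cup\{(u,f): s_F(f)=v\}.$$
Both sets are empty, so $(u,v)$ is a sink of $E\times F$. This is the "if" direction of Lemma \ref{twosink}.

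Finally, $E\times F$ is a finite graph with a sink, so Proposition \ref{hassink} gives that $L_K(E\times F)$ has gr-IBN. That argument uses only the talented monoid $T_{E\times F}$, together with its cancellativity and conicality, so nothing further needs checking.

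None of these steps is a real obstacle; the only point requiring care is confirming the finiteness hypothesis, which is immediate from the definition. As a remark, one could instead use Proposition \ref{cor4.7} with a hereditary saturated subset, but the direct route through Proposition \ref{hassink} is simpler.
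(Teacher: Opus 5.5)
Your proposal is correct and follows the paper's proof, which simply cites Lemma~\ref{twosink} to get a sink in $E\times F$ and then applies Proposition~\ref{hassink}. Your explicit checks that $E\times F$ is finite and that $(u,v)$ is a sink when $u$ and $v$ are sinks fill in details the paper leaves implicit.
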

\begin{proof}
	It immediately follows from Proposition \ref{hassink} and Lemma \ref{twosink}.
\end{proof}
The following statement provides a sufficient condition for Leavitt path algebra associated to a Cartesian product of graphs has (gr-)IBN. This is an extension of \cite[Proposition 3.15]{Li2024}.
\begin{prop}\label{source}
	Let $E$ be a finite graph without sinks and $F$ a nontrivial finite graph with a source. If $L_K(E)$ has $($gr-$)$IBN, then $L_K(E \times F)$ has $($gr-$)$IBN.
\end{prop}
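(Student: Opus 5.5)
The plan is to realize $E$ as a quotient graph $(E\times F)/H$ for a suitable hereditary saturated subset $H$. Then Proposition \ref{cor4.7} gives the gr-IBN statement, and its ungraded analogue \cite[Corollary 3.6]{Li2024} gives the IBN statement. Fix a source $v_0\in F^0$ and set
$$
H = E^0\times (F^0\setminus\{v_0\}) \subseteq (E\times F)^0 .
$$

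\emph{Step 1: $H$ is hereditary.} Take $(u,v)\in H$, so $v\neq v_0$, and an edge of $E\times F$ leaving $(u,v)$. There are two kinds of edges.
\begin{itemize}
\item An edge $(e,v)$ has range $(r_E(e),v)$, whose second coordinate is still $v\neq v_0$, so the range lies in $H$.
\item An edge $(u,f)$ with $s_F(f)=v$ has range $(u,r_F(f))$. Since $v_0$ is a source, $r_F(f)\neq v_0$, so the range again lies in $H$.
\end{itemize}

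\emph{Step 2: $H$ is saturated.} A vertex outside $H$ has the form $(u,v_0)$. Because $E$ has no sinks, $u$ emits some edge $e$. Then $(e,v_0)$ is an edge leaving $(u,v_0)$ with range $(r_E(e),v_0)\notin H$. So no vertex outside $H$ has all its ranges inside $H$, and the saturation condition holds vacuously. This is precisely where the hypothesis that $E$ has no sinks is used.

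\emph{Step 3: identify the quotient.} By Definition \ref{quotientgraph}, the vertex set of $(E\times F)/H$ is $E^0\times\{v_0\}$. Its edges are the edges of $E\times F$ whose range avoids $H$. These are the edges $(e,v_0)$ with $e\in E^1$, together with edges $(u,f)$ for which both $s_F(f)$ and $r_F(f)$ equal $v_0$. The latter cannot exist, because such an $f$ would be a loop at the source $v_0$. Hence $u\mapsto (u,v_0)$, $e\mapsto (e,v_0)$ is a graph isomorphism $E\cong (E\times F)/H$. It sends edges to edges, so it induces a $\mathbb{Z}$-graded isomorphism $L_K(E)\cong L_K((E\times F)/H)$.

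\emph{Step 4: conclude.} If $L_K(E)$ has gr-IBN, then so does $L_K((E\times F)/H)$, and Proposition \ref{cor4.7} (via the graded map $\Psi$ and Lemma \ref{r--s}) gives that $L_K(E\times F)$ has gr-IBN. For IBN, the same surjection $\Psi$, regarded as an ungraded ring homomorphism, together with \cite[Remark 1.5]{l:lomar} (equivalently \cite[Corollary 3.6]{Li2024}) transfers IBN from $L_K(E)$ to $L_K(E\times F)$.

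The main point to check is Step 2, since saturation is exactly where the no-sinks hypothesis on $E$ enters. Step 3 also needs care to rule out edges $(u,f)$ surviving in the quotient, which is where the source hypothesis on $F$ is used. If $F$ has a single vertex, that vertex being a source forces $F$ to have no edges. Then $E\times F\cong E$ directly and the statement is trivial.
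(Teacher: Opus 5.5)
Your proposal is correct and takes essentially the same route as the paper. You choose $H = E^0\times(F^0\setminus\{v_0\})$ for a source $v_0$ of $F$, use the source hypothesis for heredity and the no-sinks hypothesis on $E$ for saturation, identify $(E\times F)/H\cong E$, and conclude via Proposition~\ref{cor4.7} together with its ungraded analogue for IBN. Your explicit check that no edges $(u,f)$ survive in the quotient fills in the step the paper only calls straightforward.
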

\begin{proof}
	Let $F^0=\{v_1, v_2, \cdots, v_n\}$, where $v_1$ is a source.	Set
	$$
	H = \{(u,v_i) : u \in E^0,\ i = 2,3,\ldots,n\}.
	$$
	
	We prove that $H$ is a nontrivial hereditary saturated subset of $(E \times F)^0$. Indeed, for any $(u,v_i) \in H$, 
	$$
	s^{-1}_{E\times F}((u,v_i))= \{(e,v_i) : e \in E^1,\ s_E(e)=u\}
	\;\cup\;
	\{(u,f) : f \in F^1,\ s_F(f)=v_i\}.
	$$
	Hence, 
	$$
	r_{E\times F}(s^{-1}_{E\times F}((u,v_i)))= \{(u',v_i) : u'\in r_E(s^{-1}_E(u))\}
	\;\cup\;
	\{(u,v) : v\in r_F(s^{-1}_F(v_i))\}.
	$$
	By definition of $H$, $\{(u',v_i) : u'\in r_E(s^{-1}_E(u))\}\subseteq H$. Since $v_1$ is a source in $F$, we get that $v_1\notin r_F(s^{-1}_F(v_i))$. So, $\{(u,v) : v\in r_F(s^{-1}_F(v_i))\}\subseteq H$. Thus, $H$ is hereditary.
	
	Next, we show that $H$ is saturated. Note that since $E$ has no sinks, it follows that every $(u, v_i) \in (E \times F)^0$ is a regular vertex. For every $(u, v_i) \in (E \times F)^0$, if
	$$
	r(s^{-1}(u, v_i)) \subseteq H,
	$$
	we claim that $i \ge 2$, i.e., $(u,v_i)\in H$. Assume that $i = 1$; then we have
	$$
	r_{E\times F}(s^{-1}_{E\times F}((u,v_1)))= \{(u',v_1) : u'\in r_E(s^{-1}_E(u))\}
	\;\cup\;
	\{(u,v) : v\in r_F(s^{-1}_F(v_1))\}.
	$$
	This is a contradiction, because $(u', v_1) \notin H$ for any $u' \in E^0$. It follows that $H$ is saturated.

	It is straightforward to verify that the quotient graph $(E \times F)/H$ is exactly the given graph $E$.
	By \cite[Corollary 3.6]{Li2024} and Proposition \ref{cor4.7}, if $L_K(E)$ has (gr-)IBN, then $L_K(E \times F)$ has (gr-)IBN. 
	The proof is complete.
\end{proof}
We present an example to show that the \lq\lq has a source\rq\rq\  assumption in Proposition \ref{source} is necessary.

\begin{exas}
	Let $E$ and $F$ be as follows:
	$$
	\xymatrix{E=& u_1\ar@/^.5pc/[r]^{e_1} & u_2 \ar@/^.5pc/[l]^{e_2}& & F=& v_1\ar@/^.5pc/[r]^{f_1} & v_2 \ar@/^.5pc/[l]^{f_2}}
	$$
	
	Then, the graph $E \times F$ is as follows:
	$$
	\xymatrix{(u_1,v_1) \ar@/^.5pc/[rr]^{(u_1,f_1)} \ar@/^.5pc/[d]^{(e_1,v_1)}&& (u_1,v_2) \ar@/^.5pc/[ll]^{(u_1,f_2)} \ar@/^.5pc/[d]^{(e_1,v_2)}\\
		(u_2,v_1) \ar@/^.5pc/[rr]^{(u_2,f_1)} \ar@/^.5pc/[u]^{(e_2,v_1)}&& (u_2,v_2).\ar@/^.5pc/[ll]^{(u_2,f_2)}\ar@/^.5pc/[u]^{(e_2,v_2)}}
	$$

By Corollary \ref{maximal}, $L_K(E)$ has (gr-)IBN. But, apply Corollary \ref{collum}, we get that $L_K(E\times F)$ has no (gr-)IBN. \hfill $\Box$
\end{exas}

From Proposition \ref{source}, we obtain the following results, first established in \cite{Li2024} for IBN property.
Recall the following graph is the {\it n-line graph}:
\begin{equation}\label{n-line}
	\xymatrix{L_n=&v_1\ar[r]^{f_1}&v_2\ar[r]^{f_2}&\cdots \ar[r]^{f_{n-1}}&v_n,}
\end{equation}
and the following graph is the {\it m-cycle}:
\begin{equation}\label{m-cycle}
	\xymatrix{& & u_2\ar@/^.5pc/[r]&u_3 \ar@/^.5pc/[rd]&\\
		C_m=& u_1 \ar@/^.5pc/[ru]& & & u_4. \ar@/^.5pc/[dl]\\
		&& u_m \ar@/^.5pc/[lu]& u_5\ar@{..}@/^.5pc/[l]}
\end{equation}

\begin{cor}[{cf.~\cite[Proposition 3.15]{Li2024}}]\label{prop3.5}
	Let $E$ be a finite graph without sinks, and let $L_n$ be the $n$-line graph given in $(\ref{n-line})$. 
	If $L_K(E)$ has $($gr-$)$IBN, then $L_K(E \times L_n)$ has $($gr-$)$IBN.
\end{cor}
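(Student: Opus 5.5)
This is a direct specialization of Proposition \ref{source}: we take $F=L_n$ and check that $L_n$ meets the hypotheses imposed on $F$ there. The plan is to verify two facts about the $n$-line graph $L_n$ from (\ref{n-line}): that it is a nontrivial finite graph, and that it has a source. The graph $E$ is assumed finite and without sinks, exactly as Proposition \ref{source} requires, so nothing needs to be checked on the $E$ side.

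\textbf{Finiteness and a source.} By construction $L_n$ has vertex set $\{v_1,\dots,v_n\}$ and edges $f_1,\dots,f_{n-1}$, with $s(f_i)=v_i$ and $r(f_i)=v_{i+1}$. So $L_n$ is finite. No edge has range $v_1$, so $r^{-1}(v_1)=\emptyset$ and $v_1$ is a source. Since $F^0$ is labelled in Proposition \ref{source} with $v_1$ the source, this matches its notation exactly.

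\textbf{Nontriviality, the only delicate point.} For $n\ge 2$, $L_n$ has at least one edge and two vertices. The subset $H=\{(u,v_i): i\ge 2\}$ constructed in the proof of Proposition \ref{source} is then nonempty and proper, so the argument goes through verbatim. If $n=1$, then $L_1$ is a single isolated vertex, and $E\times L_1\cong E$ directly from Definition \ref{Cartesian}. In that case the conclusion is the hypothesis itself. So the degenerate case is handled separately, and it is trivial.

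\textbf{Conclusion.} With these checks in place, Proposition \ref{source} applied to $F=L_n$ shows that if $L_K(E)$ has (gr-)IBN, then $L_K(E\times L_n)$ has (gr-)IBN. Both the IBN and gr-IBN versions come out together, because Proposition \ref{source} covers both (through \cite[Corollary 3.6]{Li2024} and Proposition \ref{cor4.7}, respectively).
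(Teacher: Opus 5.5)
Your proposal is correct and is the same argument as the paper's. The paper proves this in one line, by applying Proposition \ref{source} with $F=L_n$ and using that $v_1$ is a source. Your separate treatment of $n=1$, where $E\times L_1\cong E$, is a small piece of extra care that the paper leaves implicit.
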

\begin{proof}
	Since $L_n$ has a source $v_1$,  $L(E \times L_n)$ has (gr-)IBN by Proposition \ref{source}.
\end{proof}

\begin{cor}[{cf.~\cite[Proposition 3.14]{Li2024}}]
	Let $C_m$ be the $m$-cycle given in $(\ref{m-cycle})$, and let $L_n$ be the $n$-line given in $(\ref{n-line})$. Then, $L_K(C_m \times L_n)$ has $($gr-$)$IBN.
\end{cor}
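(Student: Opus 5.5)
The plan is to apply Corollary \ref{prop3.5} with $E = C_m$. This requires two checks: that $C_m$ has no sinks, and that $L_K(C_m)$ has (gr-)IBN.

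\emph{No sinks.} In the $m$-cycle (\ref{m-cycle}), every vertex $u_i$ emits exactly one edge, to $u_{i+1}$ (indices mod $m$). Hence $s^{-1}(u_i)\neq\emptyset$ for all $i$, and $C_m$ is a finite graph without sinks.

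\emph{$L_K(C_m)$ has (gr-)IBN.} There are two routes.
\begin{itemize}
\item Every vertex of $C_m$ lies on exactly one cycle, so Corollary \ref{atmost} gives gr-IBN directly.
\item For IBN itself, the cycle $C_m$ is maximal: no other cycle exists, let alone one reaching $C_m$. Its set of predecessors is $C_m^0$, which is finite. Corollary \ref{maximal} (that is, \cite[Corollary 3.9]{np:tsolpaatibnp}) then gives IBN, and gr-IBN follows again by Corollary \ref{UGNIBNgrIBN}.
\end{itemize}

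\emph{Conclusion.} With both hypotheses in place, Corollary \ref{prop3.5}, applied with $E=C_m$, shows that $L_K(C_m\times L_n)$ has (gr-)IBN. Equivalently, one can invoke Proposition \ref{source} directly, using that $L_n$ has the source $v_1$; it is nontrivial whenever $n\ge 2$.

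\emph{Edge case $n=1$.} Here $C_m\times L_1\cong C_m$, and the claim is already covered by the previous step. This is the only point needing care, since Proposition \ref{source} assumes $F$ is nontrivial. Beyond this, there is no real obstacle; the proof is a direct combination of earlier results.
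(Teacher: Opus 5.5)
Your proposal is correct and matches the paper's proof. You get (gr-)IBN for $L_K(C_m)$ from Corollary \ref{maximal} (or Corollary \ref{atmost}) and then apply Corollary \ref{prop3.5} with $E=C_m$, and your treatment of $n=1$ is extra care the paper leaves implicit; the paper also notes that every vertex of $C_m\times L_n$ lies on at most one cycle, so Corollary \ref{atmost} applies to the product directly and avoids that edge case altogether.
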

\begin{proof}
	By Corollary \ref{maximal}, $L_K(C_m)$ has (gr-)IBN. Then, by applying Corollary \ref{prop3.5}, we obtain that $L_K(C_m \times L_n)$ has (gr-)IBN. Since $C_m \times L_n$ is the graph in which every vertex is in at most one cycle, we can also apply Corollary \ref{atmost} to give an independent verification of this fact.
\end{proof}

	\vskip 0.5 cm \vskip 0.5cm {
		
	\end{document}